\documentclass{llncs}
\usepackage[T1]{fontenc}
\usepackage{graphicx,graphbox}
\usepackage[all]{xy}

\usepackage{amsmath,amssymb,mathtools}
\usepackage{booktabs}
\usepackage{mathpartir}
\usepackage{tikz-cd}
\usepackage{enumerate}

\usepackage{pifont}

\usepackage{bbding}

\newcommand{\application}{\bullet} 

\usepackage{url}
\newcommand{\seq}{,\ldots,}

\allowdisplaybreaks

\begin{document}
\title{On Models of the Planar Lambda Calculus}

\author{Chad Nester\thanks{This work was supported by Estonian Research Council grant PRG2764.}}
\institute{University of Tartu\\\email{nester@ut.ee}}


\maketitle             
\begin{abstract}
  We construct an adjunction relating two approaches to modelling the planar $\lambda$-calculus: semi-closed operads and planar $\lambda$-models. We use this to obtain a planar version of Scott's representation theorem.

  \keywords{Category Theory \and Lambda Calculus \and Combinatory Logic}
\end{abstract}

\section{Introduction}

In a relatively recent paper, Hyland has demonstrated that semi-closed cartesian operads are a good notion of model of the (untyped) $\lambda$-calculus, which gives the initial such operad ~\cite{Hyland2017}. Subsequently, Hasegawa has observed that this is also true of (not necessarily cartesian) semi-closed operads and the planar $\lambda$-calculus~\cite{Hasegawa2023Planar}.

A more classical approach to modelling the $\lambda$-calculus is to proceed via combinatory logic. Therein, a model of the $\lambda$-calculus, called a \emph{$\lambda$-model}, is defined to be a combinatory algebra satisfying a number of additional axioms. The additional axioms ensure that the interpretation of $\lambda$-terms as elements of the combinatory algebra in question (via bracket abstraction) respects the $\beta$-equational theory of the $\lambda$-calculus, which need not be the case in general (see e.g.,~\cite{Barendregt1981,Hindley2008}).

In this paper, we consider models of the \emph{planar} $\lambda$-calculus. Specifically, we introduce a notion of \emph{planar $\lambda$-model} and compare it to notion of model given by semi-closed operads. We show that the relationship between these two notions of model is captured by a coreflective adjunction (Theorem~\ref{thm:s-triples-semi-closed}). Essentially, the only difference between a planar $\lambda$-model and a semi-closed operad is that while a planar $\lambda$-model exists in the context of an ambient multicategory, its corresponding semi-closed operad is itself a multicategory that exists at the same metatheoretical level as the ambient multicategory.

We apply this machinery to obtain a version of Scott's representation theorem~\cite{Scott1980} for semi-closed operads, which is to say that we obtain a version of the representation theorem for the planar $\lambda$-calculus (Theorem~\ref{thm:planar-scott}). As an intermediate step, we show that the idempotent splitting completion of a semi-closed multicategory is always a closed multicategory (Lemma~\ref{lem:split-closed}), isolating what would seem to be a fundamental part of the reason that such theorems hold.

Our work here is based on the approach to (substructural) combinatory algebras of Kuzmin et. al.~\cite{Kuzmin2026,Kuzmin2026Journal}. Therein, the structures that we will call \emph{planar combinatory algebras} in this paper were shown to correspond to \emph{weakly-closed} operads. In particular, our adjunction relating planar $\lambda$-models and semi-closed operads is modelled on a more general adjunction~\cite[Theorem 6.12]{Kuzmin2026Journal} relating (substructural) combinatory complete applicative systems and weakly-closed operads, and we follow the lead of~\cite{Kuzmin2026Journal} in developing planar $\lambda$-models as applicative systems in an arbitrary multicategory. We emphasize that a more classical, set-based approach to this sort of thing can be recovered by working in the multicategory of sets and functions.

The classical notion of $\lambda$-model builds on an intermediate notion of $\lambda$-algebra, which itself builds on the notion of combinatory algebra (see e.g., \cite{Barendregt1981,Hindley2008}). Similarly, our notion of planar $\lambda$-model builds on the notion of planar $\lambda$-algebra given by Hasegawa~\cite{Hasegawa2023Planar}, which itself builds on the notion of planar combinatory algebra. We note that what are herein called planar combinatory algebras were originally studied by Tomita~\cite{Tomita2021,Tomita2022,Tomita2023}, who referred to these structures as \emph{$\mathsf{BI}(-)^\bullet$-algebras}. In the terminology of Kuzmin et. al.~\cite{Kuzmin2026Journal}, these structures are referred to as \emph{flipped $\mathsf{BI}$-algebras}. 

Other related work includes the work of Hasegawa on extensional applicative systems and closed operads~\cite{Hasegawa2023,Hasegawa2024}, and the work on Cockett and Hofstra on Turing categories~\cite{Cockett2008}. For a reference on multicategories see Leinster~\cite{Leinster2004}, and for the connection between multicategories and sequent calculus see e.g., Shulman~\cite{Shulman2016} or Cockett and Seely~\cite{Cockett2017}. For closed multicategories see Manzyuk~\cite{Manzyuk2012}.

This paper is organised as follows: In Section~\ref{sec:background} we review multicategories, their connection to sequent calculus, and the various notions of closed multicategory involved in our development. In Section~\ref{sec:recall-kuzmin} we define planar combinatory algebras and recall the adjunction relating them to weakly-closed operads (Theorem~\ref{thm:w-triples-weakly-closed}). Section~\ref{sec:planar-adjunction} we define planar $\lambda$-models and construct an analogous adjunction relating them to semi-closed operads (Theorem~\ref{thm:s-triples-semi-closed}). In Section~\ref{sec:planar-scott} we prove a planar version of Scott's representation theorem (Theorem~\ref{thm:planar-scott}), and in Section~\ref{sec:conclusion} we conclude and discuss a few directions for future work.

\section{Multicategories and Notions of Closure}\label{sec:background}
In this section we review the basic multicategorical machinery that will be required in our development. In particular, this section introduces the notions of weakly-closed, semi-closed, and closed multicategory. We will also discuss the correspondence between multicategories and sequent calculus, which provides an alternative notation for working with multicategories.

\subsection{Multicategory Basics}\label{subsec:multicategory-basics}
We begin with the definition of a multicategory:
\begin{definition}[\cite{Leinster2004}]\label{def:multicategory}
A \emph{multicategory} $\mathcal{M}$ consists of the following data:
\begin{itemize}
  \item a set $\mathcal{M}_0$ whose elements are called the \emph{objects} of $\mathcal{M}$.
  \item for each $n \in \mathbb{N}$ and $A_1, \ldots, A_n, B \in \mathcal{M}_0$, 
        a set $\mathcal{M}(A_1, \ldots, A_n; B)$ of \emph{morphisms}. Any $f \in \mathcal{M}(A_1,\ldots,A_n;B)$ is said to \emph{have arity $n$}.
  \item for each $n \in \mathbb{N}$, $A_1,\ldots,A_n,B \in \mathcal{M}_0$, and $\Gamma_1,\ldots,\Gamma_n \in \mathcal{M}_0^*$, a \emph{composition} operation:
        \[
          \mathcal{M}(A_1,\ldots,A_n;B) \times \mathcal{M}(\Gamma_1;A_1) \times \cdots \times \mathcal{M}(\Gamma_n;A_n)
          \stackrel{\circ}{\longrightarrow}
          \mathcal{M}(\Gamma_1,\ldots,\Gamma_n;B)
        \]
        which we will usually write infix as in $f \circ (g_1,\ldots,g_n) = \circ(f,g_1,\ldots,g_n)$.
  \item for each $A \in \mathcal{M}_0$, an \emph{identity} morphism $1_A \in \mathcal{M}(A; A)$.
      \end{itemize}
      This data must be such that:
      \begin{itemize}
  \item composition is associative. That is, we have:
    \begin{align*}
      & f \circ  (g_1 \circ (h_1^1, \ldots, h_1^{k_1}), \ldots, 
      g_n \circ (h_n^1, \ldots, h_n^{k_n})) \\
      &=  (f \circ  (g_1, \ldots, g_n)) \circ (h_1^1, \ldots, h_1^{k_1}, \ldots, h_n^1, \ldots, h_n^{k_n}) 
    \end{align*}
    whenever $f,g_i,h_i^j$ are morphisms for which the composites make sense.
  \item identity morphisms are unital. That is, we have:
    \[
      f \circ (1_{A_1}, \ldots, 1_{A_n}) = f = 1_B \circ f
    \]
    for every $f \in \mathcal{M}(A_1, \ldots, A_n;B)$.
\end{itemize}
\end{definition}
If $A$ is an object of a multicategory $\mathcal{M}$ we write $A^n$ to indicate the sequence $A,\ldots,A$ consisting of $n$ copies of $A$. Similarly, if $f \in \mathcal{M}(\Gamma;A)$ we write $f^n$ to indicate the sequence consisting of $n$ copies of $f$. For example, if $g \in \mathcal{M}(A^n;B)$ then we may write $g \circ (f^n)$ to indicate the composite $g \circ (f,\ldots,f)$. Moreover, we may write $f \circ g$ instead of $f \circ (g)$ when this improves readability. It is important to note that morphisms of a multicategory $\mathcal{M}$ may have arity $0$, in which case their domain is the empty sequence as in $\mathcal{M}(;B)$. As a special case we have:
\begin{definition}\label{def:operad}
  An \emph{operad} is a multicategory with exactly one object.
\end{definition}
When working with an operad $\mathcal{M}$ we will write $*_\mathcal{M}$ or simply $*$ to denote the unique object of $\mathcal{M}$. For hom-sets, we write $\mathcal{M}(n) = \mathcal{M}(*^n;*)$.

There is a natural notion of functor between multicategories:
\begin{definition}\label{def:multifunctor}
  Let $\mathcal{M},\mathcal{N}$ be multicategories. A \emph{functor} $F : \mathcal{M} \to \mathcal{N}$ consists of:
  \begin{itemize}
  \item a mapping $F_0 : \mathcal{M}_0 \to \mathcal{N}_0$.
  \item a mapping
    \[
      F : \mathcal{M}(A_1,\ldots,A_n;B) \to \mathcal{N}(FA_1,\ldots,FA_n;FB)
    \]
    for all $A_1,\ldots,A_n,B \in \mathcal{M}_0$.
  \end{itemize}
  which must preserve composition and identities, as in:
  \begin{mathpar}
    F(f) \circ (F(g_1),\ldots,F(g_n)) = F(f \circ (g_1,\ldots,g_n))

    \text{and}

    F(1_A) = 1_{FA}
  \end{mathpar}
\end{definition}

\subsection{Multicategories and Sequent Calculus}\label{subsec:sequent-calculus}
We briefly recall the connection between multicategories and sequent calculus. Let $\Sigma$ be a (multi-sorted) signature whose operation symbols are typed as in $f : A_1,\ldots,A_n \vdash B$ where $A_1,\ldots,A_n,B$ are generating sorts. Then a \emph{term over $\Sigma$} is a sequent that is derivable according to the following rules of inference:
\begin{mathpar}
  \inferrule*[right=var]{\text{ }}{x:A \vdash x:A}

  \inferrule*[right=op]{(\Gamma_i \vdash t_i : A_i)_{i =1}^{n} \\ (f : A_1 \seq A_n \vdash B) \in \Sigma}{\Gamma_1 \seq \Gamma_n \vdash f(t_1 \seq t_n) : B}
\end{mathpar}

Crucially, for a sequent $\Gamma \vdash t : B$ to be considered well-formed, the context $\Gamma$ must not contain any repeated variables. This means that, for example, when we write $\Gamma_1 \seq \Gamma_n$ it is implied that the variables in the $\Gamma_i$ are disjoint.

Terms over $\Sigma$ form a multicategory, with identity morphisms given by the \textsc{var} rule and with composition given by substitution. More precisely, the composition operation is given by the following admissible inference rule:
\begin{mathpar}
  \inferrule*[right=cut]{(\Gamma_i \vdash t_i : A_i)_{i =1}^n \\ x_1 {:} A_1 \seq x_n {:} A_n \vdash t : B}{\Gamma_1\seq \Gamma_n \vdash t[t_1 \seq t_n/x_1 \seq x_n] : B}
\end{mathpar}

That the axioms of a multicategory hold follows from certain elementary properties of substitution. For example, for any $\Gamma \vdash t : B$ the right-unitality axiom holds as in:
\[
  (\Gamma \vdash x[t/x] : B) = (\Gamma \vdash t : B)
\]
We omit the redundant information when writing such equations, so that for example $\Gamma \vdash x[t/x] = t : B$ is an equivalent way of expressing the above equation.

This multicategory of terms over a signature is in fact the \emph{free} multicategory over that signature, in the sense that this construction gives the left adjoint of an adjunction between a category of signatures and the category of multicategories. The right adjoint maps a multicategory $\mathcal{M}$ to the $\mathcal{M}_0$-sorted signature $\Sigma_\mathcal{M}$ with an operation symbol $f : A_1,\ldots,A_n \vdash B$ for each morphism $f \in \mathcal{M}(A_1,\ldots,A_n;B)$. The counit of the adjunction gives an interpretation of terms over $\Sigma_\mathcal{M}$ as morphisms of $\mathcal{M}$, and quotienting the multicategory of terms over $\Sigma_\mathcal{M}$ by the equations that hold in the image of this interpretation yields a sequent calculus presentation of $\mathcal{M}$. In this way, the above sequent calculus provides an alternative notation for working with multicategories.

\subsection{Notions of Closure}
The following three notions of closed multicategory play an important role in our development. First, we define weakly-closed multicategories as in:
\begin{definition}[\cite{Kuzmin2026Journal}]\label{def:weakly-closed-multicategory}
    A \emph{weakly-closed multicategory} consists of a multicategory $\mathcal{M}$ together with an object $[A,B] \in \mathcal{M}_0$ and a morphism $\mathsf{ev}_{A,B} \in \mathcal{M}([A,B],A;B)$ for each $A,B \in \mathcal{M}_0$ such that for all $\Gamma \in \mathcal{M}_0^*$ there exists a mapping:
  \[
    \begin{tikzcd}
      \mathcal{M}(\Gamma,A;B) \ar[rrr,"{\Lambda^\Gamma_{A,B}}"] &&& \mathcal{M}(\Gamma;[A,B])
    \end{tikzcd}
  \]
  such that for all $f \in \mathcal{M}(\Gamma,A;B)$ we have $\mathsf{ev}_{A,B} \circ (\Lambda^\Gamma_{A,B}(f),1_A) = f$.
\end{definition}
Note that there may, in general, be more than one choice of mappings $\Lambda^\Gamma_{A,B}$. When working with a given weakly-closed multicategory we implicitly fix one such family of mappings. We prefer this approach over the alternative in which the data of a weakly-closed multicategory includes a specific choice of the $\Lambda^\Gamma_{A,B}$. In particular given the objects $[A,B]$ and morphisms $\mathsf{ev}_{A,B}$, we do not want different choices of $\Lambda^\Gamma_{A,B}$ to result in different weakly-closed multicategories.

Second, we define semi-closed multicategories as in:
\begin{definition}[\cite{Hyland2017}]\label{def:semi-closed-multicategory}
    A \emph{semi-closed multicategory} consists of a multicategory $\mathcal{M}$ together with an object $[A,B] \in \mathcal{M}_0$ and a morphism $\mathsf{ev}_{A,B} \in \mathcal{M}([A,B],A;B)$ for each $A,B \in \mathcal{M}_0$ such that for all $\Gamma \in \mathcal{M}_0^*$ there exists a mapping:
    \[
      \begin{tikzcd}
        \mathcal{M}(\Gamma,A;B) \ar[rrr,"{\Lambda^\Gamma_{A,B}}"] &&& \mathcal{M}(\Gamma;[A,B])
      \end{tikzcd}
    \]
      such that for all $f \in \mathcal{M}(\Gamma,A;B)$ we have $\mathsf{ev}_{A,B} \circ (\Lambda^\Gamma_{A,B}(f),1_A) = f$, and moreover for all $\Gamma = A_1,\ldots,A_n \in \mathcal{M}_0^*$, $g \in \mathcal{M}(\Gamma;[A,B])$, and $\{f_i \in \mathcal{M}(\Gamma_i;A_i)\}_{i=1}^n$ we have:
  \begin{equation}\label{eq:semi-closed-naturality}
    \Lambda^{\Gamma_1,\ldots,\Gamma_n}_{A,B}(g \circ (f_1,\ldots,f_n,1_A)) = \Lambda^\Gamma_{A,B}(g) \circ (f_1,\ldots,f_n)
  \end{equation}
\end{definition}
Clearly every semi-closed multicategory is weakly-closed. As with weakly-closed multicategories, when working with a given semi-closed multicategory we implicitly fix a family of mappings $\Lambda^\Gamma_{A,B}$, which must now have the new naturality property (\ref{eq:semi-closed-naturality}) that differentiates semi-closed and weakly-closed multicategories.

Finally, we define closed multicategories as in:
\begin{definition}[\cite{Manzyuk2012}]\label{def:closed-multicategory}
  A \emph{closed multicategory} consists of a multicategory $\mathcal{M}$ together with an object $[A,B] \in \mathcal{M}_0$ and a morphism $\mathsf{ev}_{A,B} \in \mathcal{M}([A,B],A;B)$ for each $A,B \in \mathcal{M}_0$ such that for all $\Gamma \in \mathcal{M}_0^*$ the mapping:
    \[
      \begin{tikzcd}
        \mathcal{M}(\Gamma;[A,B]) \ar[rrr,"{\mathsf{ev}_{A,B} \circ (-,1_A)}"]&&& \mathcal{M}(\Gamma,A;B)
      \end{tikzcd}
    \]
      has an inverse, which is to say that there exists a mapping:
  \[
    \begin{tikzcd}
      \mathcal{M}(\Gamma,A;B) \ar[rrr,"{\Lambda^\Gamma_{A,B}}"] &&& \mathcal{M}(\Gamma;[A,B])
    \end{tikzcd}
  \]
  such that for all $f \in \mathcal{M}(\Gamma,A;B)$ and $g \in \mathcal{M}(\Gamma;[A,B])$ we have:
  \begin{mathpar}
    \mathsf{ev}_{A,B}(\Lambda^\Gamma_{A,B}(f),1_A)  = f

    \text{and}
    
    \Lambda^\Gamma_{A,B}(\mathsf{ev}_{A,B} \circ (g,1_A)) = g
  \end{mathpar}
\end{definition}
When working with weakly-closed, semi-closed, and closed multicategories it is often convenient to omit the superscripts and/or subscripts from $\mathsf{ev}_{A,B}$ and $\Lambda^\Gamma_{A,B}$, and we will do so when it is unlikely to result in any confusion.

We know that every semi-closed multistory is weakly-closed, and in fact every closed multicategory is semi-closed, as in:
\begin{lemma}\label{lem:close-semi-closed}
  Every closed multicategory $\mathcal{M}$ is semi-closed.
\end{lemma}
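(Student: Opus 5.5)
We take the inverse mappings $\Lambda^\Gamma_{A,B}$ supplied by Definition~\ref{def:closed-multicategory} as the semi-closed structure. The first requirement of Definition~\ref{def:semi-closed-multicategory}, namely $\mathsf{ev}\circ(\Lambda(f),1_A)=f$, is then exactly one of the two closedness equations. So the only work is to verify the naturality condition (\ref{eq:semi-closed-naturality}).

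The key observation is that, in a closed multicategory, $\mathsf{ev}\circ(-,1_A)$ is a bijection $\mathcal{M}(\Delta;[A,B])\to\mathcal{M}(\Delta,A;B)$ for every $\Delta$. Hence two morphisms $h,h'\in\mathcal{M}(\Delta;[A,B])$ are equal as soon as $\mathsf{ev}\circ(h,1_A)=\mathsf{ev}\circ(h',1_A)$. We apply this with $\Delta=\Gamma_1,\ldots,\Gamma_n$ to the two sides of (\ref{eq:semi-closed-naturality}).

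For the left-hand side, $\mathsf{ev}\circ(\Lambda(g\circ(f_1,\ldots,f_n,1_A)),1_A)=g\circ(f_1,\ldots,f_n,1_A)$ by the first closedness equation. For the right-hand side, we use associativity and unitality of composition in $\mathcal{M}$ to rewrite
\[
\mathsf{ev}\circ(\Lambda(g)\circ(f_1,\ldots,f_n),1_A)=(\mathsf{ev}\circ(\Lambda(g),1_A))\circ(f_1,\ldots,f_n,1_A).
\]
This equals $g\circ(f_1,\ldots,f_n,1_A)$, where $g$ here means $\mathsf{ev}\circ(\Lambda(g),1_A)$. At this point we must be careful about typing. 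In (\ref{eq:semi-closed-naturality}) we have $g\in\mathcal{M}(\Gamma;[A,B])$, so for the statement to typecheck, $g$ should really be read as a morphism in $\mathcal{M}(\Gamma,A;B)$: the expression $g\circ(f_1,\ldots,f_n,1_A)$ on the left has $n+1$ arguments, and $\Lambda^\Gamma(g)$ requires $g\in\mathcal{M}(\Gamma,A;B)$. We adopt that reading. Both sides then evaluate to $g\circ(f_1,\ldots,f_n,1_A)$, and injectivity of $\mathsf{ev}\circ(-,1_A)$ (equivalently, applying $\Lambda$ and using the second closedness equation $\Lambda(\mathsf{ev}\circ(h,1_A))=h$) gives the equality.

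The only step needing care is the associativity rewrite. We should spell out that $\mathsf{ev}\circ(\Lambda(g)\circ(f_1,\ldots,f_n),1_A\circ 1_A)$ and $(\mathsf{ev}\circ(\Lambda(g),1_A))\circ(f_1,\ldots,f_n,1_A)$ are both instances of the associativity axiom of Definition~\ref{def:multicategory}. This uses the unit law $1_A=1_A\circ(1_A)$ so that the second argument fits the pattern $g_2\circ(h_2^1)$. Beyond this bookkeeping, and the typing of $g$, there is no real obstacle.
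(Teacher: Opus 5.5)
Your proof is correct and is essentially the paper's argument. The paper rewrites $g$ as $\mathsf{ev}\circ(\Lambda(g),1_A)$, uses associativity to obtain $\mathsf{ev}\circ(\Lambda(g)\circ(f_1,\ldots,f_n),1_A)$, and then applies $\Lambda(\mathsf{ev}\circ(h,1_A))=h$; this is exactly your injectivity step, stated directly. Your reading of $g$ as a morphism in $\mathcal{M}(\Gamma,A;B)$ also matches the paper's proof, which takes $g$ of that type.
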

\begin{proof}
  For all $g \in \mathcal{M}(\Gamma,A;B)$ and $(f_i \in \mathcal{M}(\Gamma_i;A_i))_{i=1}^n$ we have:
  \begin{align*}
    & \Lambda_{A,B}(g \circ (f_1,\ldots,f_n,1_A))
      = \Lambda_{A,B}(\mathsf{ev}_{A,B} \circ (\Lambda_{A,B}(g),1_A) \circ (f_1,\ldots,f_n,1_A))
    \\&= \Lambda_{A,B}(\mathsf{ev}_{A,B} \circ (\Lambda_{A,B}(g) \circ (f_1,\ldots,f_n),1_A))
    = \Lambda_{A,B}(g) \circ (f_1,\ldots,f_n)
  \end{align*}
  as required.
\end{proof}

Finally, we require a notion of functor that preserves closed structure:
\begin{definition}\label{def:closed-functor}
  Let $\mathcal{M},\mathcal{N}$ be weakly-closed multicategories. We will say that a functor $F : \mathcal{M} \to \mathcal{N}$ is \emph{closed} in case for all $A,B \in \mathcal{M}_0$ we have $F[A,B] = [FA,FB]$ and $F(\mathsf{ev}_{A,B}) = \mathsf{ev}_{A,B}$.
\end{definition}
Notice that the notion of closed functor makes sense for all of weakly-closed, semi-closed, and closed multicategories.

\section{Planar Combinatory Algebras and Weak Closure}\label{sec:recall-kuzmin}
In this section we define planar combinatory algebras and recapitulate the construction of the adjunction relating them to weakly-closed multicategories. Following Kuzmin et. al., we work with applicative systems (and planar combinatory algebras) in some ambient multicategory~\cite{Kuzmin2026Journal}. While this is different from the classical set-based approach, the two coincide when the ambient multicategory happens to be that of sets and functions. We begin with applicative systems:
\begin{definition}[\cite{Kuzmin2026Journal}]\label{def:applicative-system}
  An \emph{applicative system} $(A,\bullet)$ in a multicategory $\mathcal{M}$ consists of an object $A \in \mathcal{M}_0$, called the \emph{carrier}, and a morphism $\bullet \in \mathcal{M}(A,A;A)$, called \emph{application}.
\end{definition}

We introduce notation for applying things multiple times in sequence:
\begin{definition}[\cite{Kuzmin2026Journal}]\label{def:iterated-application}
  Let $(A,\bullet)$ be an applicative system in a multicategory $\mathcal{M}$. We define an \emph{iterated application} morphism $\bullet^n \in \mathcal{M}(A^{n+1};A)$ for each $n \in \mathbb{N}$ as in:
  \begin{mathpar}
    \bullet^0 = 1_A

    \text{and}
    
    \bullet^{n+1} = \bullet \circ (\bullet^n,1_A)
  \end{mathpar}
\end{definition}

Note that $\bullet^1 = \bullet \circ (\bullet^0,1_A) = \bullet \circ (1_A,1_A) = \bullet$. Next, The key definition concerning applicative systems is that of computable morphism:
\begin{definition}[\cite{Kuzmin2026Journal}]\label{def:computable-map}
  Let $(A,\bullet)$ be an applicative system in a multicategory $\mathcal{M}$. A morphism $f \in \mathcal{M}(A^n;A)$ is said to be \emph{$(A,\bullet)$-computable} (or simply \emph{computable}) in case there exists some $a \in \mathcal{M}(;A)$ such that $\bullet^n \circ (a,1_A,\ldots,1_A) = f$. In this case we say that $a$ is an \emph{$(A,\bullet)$-representative (or simply a \emph{representative}) of $f$}.
\end{definition}
Note that every $a \in \mathcal{M}(;A)$ is $(A,\bullet)$-computable since $\bullet^0 \circ (a) = 1_A \circ (a) = a$. 

It is often convenient to use the sequent calculus notation discussed in Section~\ref{subsec:sequent-calculus} when working with applicative systems in multicategories. When doing so, we adopt the usual syntactic conventions for working with applicative systems, wherein $\bullet$ is treated as a left-associative infix binary operator, which is usually omitted, so that for example we have $\bullet(x,y) = x \bullet y = xy$ and $xyz = (xy)z$. Moreover, when we are working with an applicative system $(A,\application)$ in a multicategory $\mathcal{M}$, all of the relevant morphisms are elements of $\mathcal{M}(A^n;A)$ for some $n \in \mathbb{N}$. This allows us to omit the types from our sequents, since everything has type $A$. For example we may write $x_1,x_2 \vdash f(x_1,x_2)$ instead of $x_1 : A , x_2 : A \vdash f(x_1,x_2) : A$, which helps to make things less cluttered. We will moreover allow ourselves to use variable names beyond $x_i$. While this can be made fully formal (see e.g.,~\cite{Shulman2016}), we refrain from doing so here. 

For example, the iterated application morphisms $\bullet^n$ of Definition~\ref{def:iterated-application} can be expressed in the sequent calculus notation as in:
\[
  \bullet^n = (x,x_1,\ldots,x_n \vdash xx_1\cdots x_n) \in \mathcal{M}(A^{n+1};A)
\]
Similarly, for $a \in \mathcal{M}(;A)$ to be an $(A,\bullet)$-representative of $f \in \mathcal{M}(A^n;A)$ (Definition~\ref{def:computable-map}) is to have:
\[
  x_1,\ldots,x_n \vdash ax_1\cdots x_n = f(x_1,\ldots,x_n)
\]

Next, a planar combinatory algebra is an applicative system with respect to which certain morphisms of the ambient multicategory are computable:
\begin{definition}\label{def:planar-combinatory-algebra} 
   An applicative system $(A,\bullet)$ in a multicategory $\mathcal{M}$ is a \emph{planar combinatory algebra} in case $\bullet \circ (1_A,\bullet) \in \mathcal{M}(A^3;A)$ and $1_A \in \mathcal{M}(A;A)$ are $(A,\bullet)$-computable, and for all $a \in \mathcal{M}(;A)$ the morphism $\bullet \circ (1_A,a) \in \mathcal{M}(A;A)$ is $(A,\bullet)$-computable.
\end{definition}
When working with a given planar combinatory algebra we fix a representative $\mathsf{B}$ of $\bullet \circ (1_A,\bullet)$, a representative $\mathsf{I}$ of $1_A$, and a representative $a^\bullet$ of $\bullet \circ (1_A,a)$ for each $a \in \mathcal{M}(;A)$. This means that in the sequent calculus notation we have:
\begin{mathpar}
  \text{{\bf [B]}} \,\, x,y,z \vdash \mathsf{B}xyz = x(yz)

  \text{{\bf [I]}} \,\, x \vdash \mathsf{I}x = x

  \text{{\bf [$^\bullet$]}} \,\, x \vdash a^\bullet x = xa
\end{mathpar}

We recall that the computable morphisms of a given planar combinatory algebra always define a weakly-closed operad:
\begin{lemma}[\cite{Kuzmin2026Journal}]\label{lem:computable-maps-operad}
  Let $(A,\bullet)$ be a planar combinatory algebra in a multicategory $\mathcal{M}$. Then there is a weakly-closed operad $R(\mathcal{M},A,\bullet)$ with morphisms $\overline{f} \in R(\mathcal{M},A,\bullet)$ given by morphisms $f \in \mathcal{M}(A^n;A)$, and with composition and identities inherited from $\mathcal{M}$, so that $\overline{f} \circ (\overline{g_1},\ldots,\overline{g_n}) = \overline{f \circ (g_1,\ldots,g_n)}$ and $1_{*} = \overline{1_A}$.
\end{lemma}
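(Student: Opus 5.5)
The plan is to read the statement as saying that the morphisms of $R(\mathcal{M},A,\bullet)$ are the \emph{$(A,\bullet)$-computable} morphisms $f \in \mathcal{M}(A^n;A)$. Then three things must be shown. First, computable morphisms are closed under composition in $\mathcal{M}$. Second, $1_A$ is computable. Third, taking $[*,*] = *$ and $\mathsf{ev} = \overline{\bullet}$ gives a weakly-closed structure. The identity is immediate, since $\mathsf{I}$ represents $1_A$. Likewise $\bullet = \bullet^1$ is computable, with representative $\mathsf{I}$, because $\mathsf{I}xy = xy$. The associativity and unit axioms are inherited verbatim from $\mathcal{M}$.

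The key tool is a planar bracket abstraction lemma. Call a \emph{planar term} in variables $x_1,\ldots,x_n$ any term built from constants in $\mathcal{M}(;A)$ and $\bullet$, using each $x_i$ exactly once and in the order $x_1,\ldots,x_n$ from left to right. The lemma says: for every planar term $t$ in $x_1,\ldots,x_n$, the morphism $x_1,\ldots,x_n \vdash t$ is computable. I would prove a one-variable step by induction on $t$. For a planar term $t$ in $\Gamma, x$ (with $x$ last), produce a planar term $\lambda x.t$ in $\Gamma$ such that $\Gamma, x \vdash (\lambda x.t)\,x = t$. The cases are as follows.
\begin{itemize}
\item If $t = x$, put $\lambda x.t = \mathsf{I}$.
\item If $t = s\,u$ with $x$ occurring in $u$, put $\lambda x.t = \mathsf{B}\,s\,(\lambda x.u)$. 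Then $\mathsf{B}\,s\,(\lambda x.u)\,x = s((\lambda x.u)x) = s\,u$.
\item If $t = s\,u$ with $x$ occurring in $s$, planarity forces $u$ to contain no variables, since $x$ is last. So $u$ denotes some $c \in \mathcal{M}(;A)$. Put $\lambda x.t = \mathsf{B}\,c^\bullet\,(\lambda x.s)$, which satisfies $\mathsf{B}\,c^\bullet\,(\lambda x.s)\,x = c^\bullet((\lambda x.s)x) = s\,c$.
\end{itemize}
Each of these equations is obtained from \textbf{[I]}, \textbf{[B]} and \textbf{[$^\bullet$]} by cut, i.e.\ by composition in $\mathcal{M}$, together with the inductive hypothesis. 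In each case $\lambda x.t$ is again planar in $\Gamma$. Iterating $n$ times yields a closed term $a$ with $x_1,\ldots,x_n \vdash a x_1 \cdots x_n = t$.

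With the lemma in hand, closure under composition is straightforward. Suppose $a$ represents $f$ of arity $n$ and $b_i$ represents $g_i$ of arity $k_i$. Then $f \circ (g_1,\ldots,g_n)$ equals the term $a\,(b_1 \vec{y}_1)\cdots(b_n \vec{y}_n)$, where the variable blocks $\vec{y}_i$ appear consecutively and in order. This is a planar term, with arity-$0$ $g_i$ contributing constants, so it is computable.

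For weak closure, let $f \in \mathcal{M}(A^{n+1};A)$ be computable with representative $a$. Define $\Lambda(\overline f) = \overline{(x_1,\ldots,x_n \vdash a x_1\cdots x_n)}$, which is computable by the lemma. Then
\[
\mathsf{ev}\circ(\Lambda(\overline f),1) = \overline{x_1,\ldots,x_{n+1} \vdash a x_1 \cdots x_{n+1}} = \overline f .
\]
This uses a fixed choice of representative for each $f$, which is permitted since no naturality is required.

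I expect the main obstacle to be the abstraction step when the abstracted variable sits in the function position. That case is exactly where planarity, and not mere linearity, is needed. It is handled by the observation that the argument must then be closed, which is what makes the $c^\bullet$ combinator sufficient. The rest is bookkeeping to check that all equations are genuine equalities of morphisms in $\mathcal{M}$ obtained by composition.
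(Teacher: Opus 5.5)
Your proof is correct and complete. The paper gives no argument of its own for this lemma; it imports it from Kuzmin et al. The surrounding text (``the computable morphisms of a given planar combinatory algebra always define a weakly-closed operad'', and the proof of the planar Scott theorem) confirms your reading that the morphisms of $R(\mathcal{M},A,\bullet)$ are exactly the $(A,\bullet)$-computable ones, so your argument stands as a self-contained proof.

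Your planar bracket abstraction is sound. A term containing the last variable $x$ is either $x$ itself or an application, so your three cases are exhaustive, and each equation follows from \textbf{[I]}, \textbf{[B]} and \textbf{[$^\bullet$]} by cut. The analysis also correctly isolates the one place where $(-)^\bullet$ is needed: $x$ in function position, where planarity forces the argument to be closed. Your route actually proves slightly more than the lemma asks for, namely that every planar term over constants in $\mathcal{M}(;A)$ is computable. This is a planar form of combinatory completeness, and closure under composition then becomes a one-line corollary.

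One small simplification is available in the weak-closure step. You do not need the abstraction lemma there, because $\Lambda(\overline f) = \overline{\bullet^n \circ (a,1_A,\ldots,1_A)}$ is computable by definition, with representative $a$ itself.
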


Conversely, reflexive objects in weakly-closed multicategories always define planar combinatory algebras. First, recall the notion of reflexive object:
\begin{definition}\label{def:reflexive-object}
  Let $\mathcal{M}$ be a weakly-closed multicategory. A \emph{reflexive object in $\mathcal{M}$} is a triple $(U,s,r)$ consisting of an object $U \in \mathcal{M}_0$ together with morphisms $s \in \mathcal{M}([U,U];U)$ and $r \in \mathcal{M}(U;[U,U])$ such that $r \circ s = 1_{[U,U]}$.
\end{definition}

Now we have:
\begin{lemma}\label{lem:reflexive-weakly-closed}
  Let $\mathcal{M}$ be a weakly-closed multicategory, and let $(U,s,r)$ be a reflexive object in $\mathcal{M}$. Define $\bullet \in \mathcal{M}(U,U;U)$ by $\bullet = \mathsf{ev}_{U,U} \circ (r,1_A)$. Then $(U,\bullet)$ is a planar combinatory algebra in $\mathcal{M}$.
\end{lemma}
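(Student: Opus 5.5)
We have to exhibit representatives $\mathsf{B}$, $\mathsf{I}$ and $a^\bullet$ in $\mathcal{M}(;U)$ satisfying \textbf{[B]}, \textbf{[I]} and \textbf{[$^\bullet$]}. The approach is the same in all three cases. Write down the target morphism $f \in \mathcal{M}(U^n;U)$ and curry it $n$ times using the weak closure. Then insert $s$ after each currying step. Finally, check that iterated application undoes this, using only two facts: the $\beta$-law $\mathsf{ev}\circ(\Lambda(g),1)=g$ and the retraction $r\circ s = 1_{[U,U]}$. Naturality of $\Lambda$ should not be needed, so weak closure suffices.

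\emph{The basic step.} For $g \in \mathcal{M}(\Gamma,U;U)$ put $\lambda(g) = s\circ\Lambda^\Gamma_{U,U}(g) \in \mathcal{M}(\Gamma;U)$. Then
\[
\bullet\circ(\lambda(g),1_U) = \mathsf{ev}\circ(r\circ s\circ\Lambda(g),1_U) = \mathsf{ev}\circ(\Lambda(g),1_U) = g,
\]
by associativity of composition, $r\circ s=1$, and the $\beta$-law. In sequent notation this says $\Gamma,x\vdash \lambda(g)\,x = g(\Gamma,x)$.

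\emph{Iterating.} For $f\in\mathcal{M}(U^n;U)$ define $\lambda^0(f)=f$ and $\lambda^{k+1}(f)=\lambda(\lambda^k(f))$, where at stage $k$ the last remaining variable is curried, so $\lambda^k(f)\in\mathcal{M}(U^{n-k};U)$. Set $a=\lambda^n(f)\in\mathcal{M}(;U)$. By induction on $n$ we get $\bullet^n\circ(a,1_U,\ldots,1_U)=f$. The inductive step uses $\bullet^{k+1}=\bullet\circ(\bullet^k,1_U)$, the basic step, and associativity and unitality of composition to move the identities around. This proves the general claim: \emph{every} $f\in\mathcal{M}(U^n;U)$ is $(U,\bullet)$-computable.

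Specialising gives the three required representatives:
\begin{itemize}
\item $\mathsf{B}$ represents $f=\bullet\circ(1_U,\bullet)$ with $n=3$;
\item $\mathsf{I}$ represents $1_U$ with $n=1$;
\item $a^\bullet$ represents $\bullet\circ(1_U,a)$ with $n=1$.
\end{itemize}
Hence $(U,\bullet)$ is a planar combinatory algebra. Incidentally, the $1_A$ in the definition of $\bullet$ in the statement should read $1_U$.

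The only step needing care is the induction. It amounts to keeping the associativity equations straight when rewriting $\bullet^{k+1}\circ(\lambda^{k+1}(f),1,\ldots,1)$ as $\bullet^k\circ(\bullet\circ(\lambda(\lambda^{k}(f)),1),1,\ldots,1)$, with the fresh variable placed in the correct position so that planarity (no permutation of variables) is respected. Since we always curry the rightmost variable, and $\bullet^n$ applies its arguments left to right, the orders match.
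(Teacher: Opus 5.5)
Your proposal is correct and is essentially the paper's proof. The paper sets $\mathsf{I}=s\circ\Lambda(1_U)$, $a^\bullet=s\circ\Lambda(\bullet\circ(1_U,a))$ and $\mathsf{B}=(s\circ\Lambda)^{\circ 3}(\bullet\circ(1_U,\bullet))$, and verifies each using only $r\circ s=1_{[U,U]}$ and $\mathsf{ev}\circ(\Lambda(g),1_U)=g$; this is exactly your $\lambda^n(f)$ specialised, with your induction packaging the three checks into the general fact that every $f\in\mathcal{M}(U^n;U)$ is computable.

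One bookkeeping remark: the rewriting in your last paragraph, via $\bullet^{k+1}=\bullet^k\circ(\bullet,1_U,\ldots,1_U)$, only closes the induction if the hypothesis is stated for $f\in\mathcal{M}(\Gamma,U^k;U)$ with arbitrary $\Gamma$. By contrast, the decomposition $\bullet^{k+1}=\bullet\circ(\bullet^k,1_U)$ that you cite earlier, combined with $\lambda^{k+1}(f)=\lambda^k(\lambda(f))$, works exactly as you stated it.
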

\begin{proof}
  Let $\mathsf{I} = s \circ \Lambda_{U,U}(1_U) \in \mathcal{M}(;U)$. Then $\mathsf{I}$ is a $(U,\bullet)$-representative of $1_U$:
  \begin{align*}
    & \bullet \circ (\mathsf{I},1_U)
      = \mathsf{ev}_{U,U} \circ (r,1_U) \circ (s \circ \Lambda_{U,U}(1_U),1_U)
      = \mathsf{ev}_{U,U} \circ (r \circ s \circ \Lambda_{U,U}(1_U),1_U)
      \\&= \mathsf{ev}_{U,U} \circ (1_{[U,U]} \circ \Lambda_{U,U}(1_U),1_U)
      = \mathsf{ev}_{U,U} \circ (\Lambda_{U,U}(1_U),1_U)
      = 1_U
  \end{align*}
  Let $\mathsf{B}_0 = s \circ \Lambda_{U,U}(\bullet \circ (1_U,\bullet)) \in \mathcal{M}(U,U;U)$,  $\mathsf{B}_1 = s \circ \Lambda_{U,U}(\mathsf{B}_0) \in \mathcal{M}(U;U)$, and $\mathsf{B} = s \circ \Lambda_{U,U}(\mathsf{B}_1) \in \mathcal{M}(;U)$. We show that $\mathsf{B}$ is a $(U,\bullet)$-representative of $\bullet \circ (1_U,\bullet)$. Notice that we have:
  \begin{mathpar}
    \bullet \circ (\mathsf{B}_0,1_U) = \bullet \circ (1_U,\bullet)

    \bullet \circ (\mathsf{B}_1,1_U) = \mathsf{B}_0

    \bullet \circ (\mathsf{B},1_U) = \mathsf{B}_1
  \end{mathpar}
  from which it follows that:
  \begin{align*}
    & \bullet^3 \circ (\mathsf{B},1_U,1_U,1_U)
      = \bullet \circ (\bullet \circ (\bullet \circ (\mathsf{B},1_U),1_U),1_U)
      \\&= \bullet \circ (\bullet \circ (\mathsf{B}_1,1_U),1_U)
      = \bullet \circ (\mathsf{B}_0,1_U)
      = \bullet \circ (1_U,\bullet)
  \end{align*}
  as required. Finally, let $a^\bullet = s \circ \Lambda_{U,U}(\bullet \circ (1_U,a)) \in \mathcal{M}(;U)$ for each $a \in \mathcal{M}(;U)$. We show that $a^\bullet$ is a $(U,\bullet)$-representative of $\bullet \circ (1_U,a)$ as in:
  \begin{align*}
    & \bullet \circ (a^\bullet,1_U)
      = \mathsf{ev}_{U,U} \circ (r,1_U) \circ (s \circ \Lambda_{U,U}(\bullet \circ (1_U,a)), 1_U)
    \\&= \mathsf{ev}_{U,U} \circ (\Lambda_{U,U}(\bullet \circ (1_U,a)),1_U)
    = \bullet \circ (1_U,a)
  \end{align*}
  It follows that $(U,\bullet)$ is a planar combinatory algebra.
\end{proof}

Lemma~\ref{lem:reflexive-weakly-closed} is particularly interesting for its effect on a weakly-closed \emph{operad} $\mathcal{M}$ (Definition~\ref{def:operad}). Notice that in this setting we must have $[*,*] = *$, since $*$ is the only object $\mathcal{M}$, and there is no other possible choice for $[*,*]$. This means that $(*,1,1)$ is a reflexive object, and so Lemma~\ref{lem:reflexive-weakly-closed} gives that $(*,\mathsf{ev}_{*,*})$ is a planar combinatory algebra in $\mathcal{M}$. Lemma~\ref{lem:reflexive-weakly-closed} is a modest generalisation of the corresponding result of Kuzmin et. al.~\cite[Lemma 6.5]{Kuzmin2026Journal}, which only considers weakly-closed operads, instead of arbitrary weakly-closed multicategories.

We end this section by recalling the adjunction relating planar combinatory algebras and weakly-closed operads. To that end, we construct a category of planar combinatory algebras in context:
\begin{definition}\label{def:w-triples}
  A \emph{$w$-triple} $(\mathcal{M},A,\bullet)$ consists of a multicategory $\mathcal{M}$ together with a planar combinatory algebra $(A,\bullet)$ in $\mathcal{*}$. A \emph{morphism} of $w$-triples $F : (\mathcal{M},A,\bullet) \to (\mathcal{N},B,\odot)$ consists of a functor $F : \mathcal{M} \to \mathcal{N}$ such that $FA = B$ and $F(\bullet) = \odot$. Write $\mathsf{T}_w$ for the category of $w$-triples and their morphisms.
\end{definition}

Now, as promised, we have:
\begin{theorem}[\cite{Kuzmin2026Journal}]\label{thm:w-triples-weakly-closed}
  There is an adjunction:
    \[
    \begin{tikzcd}
      \mathsf{O}_w \ar[r,phantom,"{\scriptstyle \perp}"] \ar[r,"L", shift left=0.5em] & \mathsf{T}_w \ar[l,"R", shift left=0.5em]
    \end{tikzcd}
  \]
  where $\mathsf{O}_w$ is the category of weakly-closed operads and closed functors. Moreover, the unit of this adjunction is an isomorphism, witnessing $\mathsf{O}_w$ as a full subcategory of $\mathsf{T}_w$.
\end{theorem}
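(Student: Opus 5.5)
I will define the two functors and then exhibit the adjunction through its unit and counit, or equivalently through a natural bijection of hom-sets. The left adjoint $L : \mathsf{O}_w \to \mathsf{T}_w$ sends a weakly-closed operad $\mathcal{O}$ to the $w$-triple $(\mathcal{O},*,\mathsf{ev}_{*,*})$. This is a $w$-triple by the remark following Lemma~\ref{lem:reflexive-weakly-closed}: $[*,*]=*$, so $(*,1,1)$ is reflexive and $(*,\mathsf{ev}_{*,*})$ is a planar combinatory algebra. On morphisms, $L$ sends a closed functor $F$ to itself. Closedness gives $F(*) = *$ and $F(\mathsf{ev}_{*,*}) = \mathsf{ev}_{*,*}$, so $F$ is a morphism of $w$-triples.

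The right adjoint $R$ sends $(\mathcal{M},A,\bullet)$ to the weakly-closed operad $R(\mathcal{M},A,\bullet)$ of Lemma~\ref{lem:computable-maps-operad}. Reading that lemma as intended, its $n$-ary morphisms are the $(A,\bullet)$-computable $f \in \mathcal{M}(A^n;A)$, and I will use the fact that its evaluation is $\overline{\bullet}$. This holds because $\bullet$ is represented by $\mathsf{I}$: we have $\mathsf{I}xy = xy$, since $\mathsf{I}x = x$ and composition is functorial. The lambda maps come from the representatives: a computable $f$ of arity $n+1$ with representative $a$ is sent to the computable morphism $x_1,\ldots,x_n \vdash ax_1\cdots x_n$. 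On a morphism of $w$-triples $G$, define $R(G)(\overline{f}) = \overline{G f}$. This is well typed because $G$ preserves $A$, $\bullet$, and all iterated applications $\bullet^n$, so it sends a representative $a$ of $f$ to a representative $G a$ of $Gf$. It is closed because $G(\bullet) = \odot$.

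Next I will build the unit $\eta_{\mathcal{O}} : \mathcal{O} \to RL\mathcal{O}$, $g \mapsto \overline{g}$, and check that it is a bijective closed functor. The required point is that every $g \in \mathcal{O}(n)$ is $\mathsf{ev}$-computable. Its representative is $\Lambda^n(g)$, the $n$-fold currying down to arity $0$. Repeated use of $\mathsf{ev}\circ(\Lambda(f),1)=f$ gives $\mathsf{ev}^n \circ (\Lambda^n g,1,\ldots,1) = g$. Composition, identities, and evaluation are inherited, so $\eta$ is an isomorphism of operads preserving $\mathsf{ev}$, and hence an iso in $\mathsf{O}_w$.

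The counit $\varepsilon : LR(\mathcal{M},A,\bullet) \to (\mathcal{M},A,\bullet)$ is the evident inclusion functor: it sends $*$ to $A$ and $\overline{f}$ to $f$. It sends $\mathsf{ev} = \overline{\bullet}$ to $\bullet$, so it is a morphism of $w$-triples. I then verify the triangle identities. Both reduce to the observation that every functor involved acts on morphisms by $f \mapsto f$ up to the bars, so both composites are literal identities.

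I expect the only delicate point to be naturality, and the fact that $R$ is a functor into the category of closed functors. The issue is that $\Lambda$ is only implicitly fixed and not preserved by functors. Definition~\ref{def:closed-functor} asks only for preservation of $[-,-]$ and $\mathsf{ev}$, not of $\Lambda$, so this causes no problem. The remaining care is to confirm that the data of $R(\mathcal{M},A,\bullet)$ really make $\mathsf{ev}=\overline{\bullet}$, so that $\varepsilon$ and $R(G)$ preserve it. Once that is settled, fullness of $\mathsf{O}_w$ in $\mathsf{T}_w$ follows from the unit being an isomorphism, by the standard fact that a left adjoint with invertible unit is fully faithful.
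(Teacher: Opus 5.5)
Your proposal is correct and takes the same route as the paper, which only sketches the argument. It defines $R$ on objects via Lemma~\ref{lem:computable-maps-operad}, read as you read it, with computable morphisms and $\mathsf{ev} = \overline{\bullet}$, and defines $L$ via Lemma~\ref{lem:reflexive-weakly-closed} applied to the reflexive object $(*,1,1)$, declaring the rest ``routine''. Your remaining checks fill in that routine part correctly: the unit is invertible because each $g \in \mathcal{O}(n)$ is represented by its $n$-fold curry, the counit is the inclusion, closedness of $R(G)$ comes from $G(\bullet)=\odot$, and the triangle identities hold on the nose.
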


The interesting part of this adjunction is the action of the functors on objects. For the right adjoint, this is given by the construction of Lemma~\ref{lem:computable-maps-operad}. For the left adjoint, one applies Lemma~\ref{lem:reflexive-weakly-closed}. The rest of the construction is routine.

\section{Planar Lambda Models and Semi-Closure}\label{sec:planar-adjunction}
In this section we give a definition of planar $\lambda$-model, and show that the adjunction of Theorem~\ref{thm:w-triples-weakly-closed} restricts to an adjunction relating planar $\lambda$-models to semi-closed operads. Our definition of planar $\lambda$-model relies on the definition of planar $\lambda$-algebra given by Hasegawa~\cite{Hasegawa2023Planar}. Adapted for our purposes, this is:
\begin{definition}[\cite{Hasegawa2023Planar}]\label{def:planar-lambda-algebra}
  An applicative system $(A,\bullet)$ in a multicategory $\mathcal{M}$ is a \emph{planar $\lambda$-algebra} in case it is a planar combinatory algebra in  which the following axioms hold for some choice of $\mathsf{B}$, $\mathsf{I}$, and $a^\bullet$:
  \begin{enumerate}[{\bf [P.1]}]
  \item $\vdash \mathsf{B}(\mathsf{B}(\mathsf{B}\mathsf{I}))\mathsf{B} = \mathsf{B}$.
  \item $\vdash \mathsf{BII} = \mathsf{I}$.
  \item $\vdash \mathsf{BI}a^\bullet = a^\bullet$ for all $a \in \mathcal{M}(;A)$.
  \item $\vdash \mathsf{BB}^\bullet(\mathsf{BB}(\mathsf{BBB})) = \mathsf{B}(\mathsf{BB})\mathsf{B}$.
  \item $\vdash \mathsf{BI}^\bullet\mathsf{B} = \mathsf{BI}$.
  \item $\vdash \mathsf{B}a^{\bullet \bullet}\mathsf{B} = \mathsf{B}(\mathsf{B}a^\bullet)\mathsf{B}$ for all $a \in \mathcal{M}(;A)$.
  \item $\vdash (ab)^\bullet = \mathsf{B}b^\bullet(\mathsf{B}a^\bullet\mathsf{B})$ for all $a,b \in \mathcal{M}(;A)$.
  \end{enumerate}
\end{definition}
When working with a given planar $\lambda$-algebra we fix representatives $\mathsf{B}$ of $\bullet \circ (1,\bullet)$, $\mathsf{I}$ of $1_A$, and $a^\bullet$ of $\bullet \circ (1_A,a)$ for which axioms {\bf [P.1-7]} hold. We will not attempt to explain these axioms here, noting only that they seem to have been derived analogously to Selinger's axioms for (classical) $\lambda$-algebras~\cite{Selinger2002}. Before proceeding, we note that axiom {\bf [P.3]} is redundant, as in: 
\begin{align*}
    \vdash \,\,&\mathsf{BI}a^\bullet
    \overset{\text{[$^\bullet$]}}{=} a^{\bullet \bullet}(\mathsf{BI})
    \overset{\text{[B]}}{=} \mathsf{B}a^{\bullet \bullet}\mathsf{BI}
    \overset{\text{[P.6]}}{=} \mathsf{B}(\mathsf{B}a^\bullet)\mathsf{BI}
    \\& \overset{\text{[B]}}{=} \mathsf{B}a^\bullet(\mathsf{BI})
    \overset{\text{[P.5]}}{=} \mathsf{B}a^\bullet(\mathsf{BI}^\bullet \mathsf{B})
    \overset{\text{[P.7]}}{=} (\mathsf{I}a)^\bullet
    \overset{\text{[I]}}{=} a^\bullet
\end{align*}

We will require a few technical facts about planar $\lambda$-algebras:
\begin{lemma}[\cite{Hasegawa2023Planar}\footnote{While Lemma~\ref{lem:lambda-algebra-facts}.(i-iv) are due to Hasegawa~\cite[Lemma 3]{Hasegawa2023Planar}, Lemma~\ref{lem:lambda-algebra-facts}.(v-vii) are novel.}]\label{lem:lambda-algebra-facts}
  If $(A,\bullet)$ is a planar $\lambda$-algebra in a multicategory $\mathcal{M}$, we have:
  \begin{enumerate}[(i)]
  \item $x \vdash \mathsf{BB}(\mathsf{B}x) = \mathsf{B}(\mathsf{B}(\mathsf{B}x))\mathsf{B}$. 
  \item $x,y \vdash \mathsf{B}(\mathsf{B}xy) = \mathsf{B}(\mathsf{B}x)(\mathsf{B}y)$. 
  \item $x \vdash \mathsf{B}x\mathsf{I} = \mathsf{BI}x$. 
  \item $x,y,z \vdash \mathsf{B}(\mathsf{B}xy)z = \mathsf{B}x(\mathsf{B}yz)$.
  \item $x \vdash \mathsf{B}(\mathsf{BI})(\mathsf{B}x) = \mathsf{B}x$.
  \item $x,y \vdash \mathsf{BI}(\mathsf{B}xy) = \mathsf{B}xy$.
  \item $x \vdash \mathsf{BI}(\mathsf{B}x) = \mathsf{B}x$.
  \end{enumerate}
\end{lemma}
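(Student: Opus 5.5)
Parts (i)–(iv) are due to Hasegawa~\cite[Lemma 3]{Hasegawa2023Planar}; I would either cite them or rederive them, and I focus here on (v)–(vii). The method is equational reasoning in the sequent calculus notation. Every step is one of three kinds: an instance of \textbf{[B]}, \textbf{[I]} or \textbf{[$^\bullet$]}; a closed axiom from \textbf{[P.1--7]}, substituted into a context by composition in $\mathcal{M}$; or a previously established item. The point of (i)–(iv) is that they let us manipulate $\mathsf{B}$-terms containing free variables, where the closed axioms alone do not apply directly.

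For (i), I would apply both sides to a fresh variable. The right-hand side $\mathsf{B}(\mathsf{B}(\mathsf{B}x))\mathsf{B}$ is obtained from the closed term $\mathsf{B}(\mathsf{B}(\mathsf{BB}))\mathsf{B}$ applied to $x$, using \textbf{[B]} twice. The left-hand side $\mathsf{BB}(\mathsf{B}x)$ is $\mathsf{B}(\mathsf{BB})\mathsf{B}x$ by \textbf{[B]}. The two closed terms should then be identified using \textbf{[P.1]} and \textbf{[P.4]}.

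The other items are routine consequences in the same style:
\begin{itemize}
\item (ii) follows from (i) after rewriting $\mathsf{B}xy$ via \textbf{[B]}.
\item (iii) uses $x\mathsf{I}$-style manipulations: $\mathsf{B}x\mathsf{I}$ is rewritten with $\mathsf{I}^\bullet$, then \textbf{[P.5]} is applied.
\item (iv) follows from (ii) applied to $z$, together with \textbf{[B]}.
\end{itemize}

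For (vii), I would write $\mathsf{B}x = \mathsf{B}x$ with $x$ free and aim to use \textbf{[P.1]}. By \textbf{[B]} we have $\mathsf{BI}(\mathsf{B}x) = \mathsf{B}(\mathsf{BI})\mathsf{B}x$. So it suffices to show $\mathsf{B}(\mathsf{BI})\mathsf{B} = \mathsf{B}$ as closed terms, and then apply both sides to $x$. This closed identity should come from \textbf{[P.1]} and \textbf{[P.2]}. Using (ii) with $x := \mathsf{B}$ and $y := \mathsf{I}$, we get $\mathsf{B}(\mathsf{B}\mathsf{B}\mathsf{I}) = \mathsf{B}(\mathsf{BB})(\mathsf{BI})$. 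Combining this with \textbf{[P.1]}, which states $\mathsf{B}(\mathsf{B}(\mathsf{BI}))\mathsf{B}=\mathsf{B}$, lets me massage $\mathsf{B}(\mathsf{BI})\mathsf{B}$ into $\mathsf{B}$. An alternative route is (i) with $x := \mathsf{I}$: this gives $\mathsf{BB}(\mathsf{BI}) = \mathsf{B}(\mathsf{B}(\mathsf{BI}))\mathsf{B} = \mathsf{B}$ by \textbf{[P.1]}. Applying to $x$ then yields $\mathsf{B}(\mathsf{BI}x) = \mathsf{B}x$, which is closely related and may be the more convenient handle.

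Item (vi) follows from (vii): apply (vii) and then \textbf{[B]} to $y$. More precisely, $\mathsf{BI}(\mathsf{B}xy)$ applied to $z$ gives $\mathsf{B}xyz$ by \textbf{[B]} and \textbf{[I]}. Since we cannot use extensionality, though, I would instead derive (vi) from (iv):
\[\mathsf{BI}(\mathsf{B}xy) = \mathsf{B}(\mathsf{BI}x)y.\]
This reduces (vi) to showing $\mathsf{BI}x$ can be replaced by $x$ inside $\mathsf{B}(-)y$. Hence (vii), or rather a version of it for the first argument of $\mathsf{B}$, is really the crux. Item (v) follows from (ii) with $x := \mathsf{I}$: we get $\mathsf{B}(\mathsf{BI})(\mathsf{B}x) = \mathsf{B}(\mathsf{BI}x)$, and then we need $\mathsf{B}(\mathsf{BI}x) = \mathsf{B}x$, which is exactly the consequence of (i) and \textbf{[P.1]} derived above.

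The main obstacle is the absence of extensionality: identities with free variables must be obtained purely syntactically, never by applying both sides to extra arguments. For each of (v)–(vii) I therefore expect to hunt for the right instance of (i)/(ii)/(iv) combined with the closed fact $\mathsf{BB}(\mathsf{BI}) = \mathsf{B}$. My first attempt will be to reduce everything to the identity $\mathsf{B}(\mathsf{BI}x) = \mathsf{B}x$ together with \textbf{[P.2]}.
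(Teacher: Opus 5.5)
\paragraph{A gap at (vii).} Part (vii) is not actually proved. Reducing it to the closed identity $\vdash \mathsf{B}(\mathsf{BI})\mathsf{B} = \mathsf{B}$ is legitimate, since $\mathsf{BI}(\mathsf{B}x) = \mathsf{B}(\mathsf{BI})\mathsf{B}x$ by \textbf{[B]}. But you never derive that identity. The instance $\mathsf{B}(\mathsf{BBI}) = \mathsf{B}(\mathsf{BB})(\mathsf{BI})$ of (ii), combined with \textbf{[P.1]} and \textbf{[P.2]}, does not visibly produce it; ``lets me massage'' is not a derivation.

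Your fallback $x \vdash \mathsf{B}(\mathsf{BI}x) = \mathsf{B}x$ only controls $\mathsf{BI}x$ in the first argument of $\mathsf{B}$. In (vii), $\mathsf{BI}$ sits in head position, applied to $\mathsf{B}x$, which carries only one argument. So neither that fact nor (iv) applies as it stands.

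The missing idea is an expand-and-contract step. Use (v) to rewrite $\mathsf{B}x$ as the two-argument term $\mathsf{B}(\mathsf{BI})(\mathsf{B}x)$, so that (vi) applies, and then contract again with (v):
\[
  \mathsf{BI}(\mathsf{B}x) \overset{\text{(v)}}{=} \mathsf{BI}(\mathsf{B}(\mathsf{BI})(\mathsf{B}x)) \overset{\text{(vi)}}{=} \mathsf{B}(\mathsf{BI})(\mathsf{B}x) \overset{\text{(v)}}{=} \mathsf{B}x .
\]
This is the paper's argument. Your closed identity can be rescued by the same device, expanding the trailing $\mathsf{B}$ with \textbf{[P.1]}:
\begin{align*}
  \mathsf{B}(\mathsf{BI})\mathsf{B}
  &\overset{\text{[P.1]}}{=} \mathsf{B}(\mathsf{BI})(\mathsf{B}(\mathsf{B}(\mathsf{BI}))\mathsf{B})
  \overset{\text{(iv)}}{=} \mathsf{B}(\mathsf{B}(\mathsf{BI})(\mathsf{B}(\mathsf{BI})))\mathsf{B}
  \\&\overset{\text{(v)}}{=} \mathsf{B}(\mathsf{B}(\mathsf{BI}))\mathsf{B}
  \overset{\text{[P.1]}}{=} \mathsf{B}
\end{align*}
Note that the ingredients here are (iv) and (v), not \textbf{[P.2]}.

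\paragraph{Parts (v) and (vi), and your sketch of (i).} Your (v) and (vi) are correct, by a longer route than the paper's. From (i) at $x := \mathsf{I}$ and \textbf{[P.1]} you get $\vdash \mathsf{BB}(\mathsf{BI}) = \mathsf{B}$, hence $x \vdash \mathsf{B}(\mathsf{BI}x) = \mathsf{B}x$. Then (v) follows from (ii) at $\mathsf{I}$, and (vi) from (iv) as $\mathsf{BI}(\mathsf{B}xy) = \mathsf{B}(\mathsf{BI}x)y = \mathsf{B}xy$. State this last step explicitly. Your first suggestion, applying (vii) to $y$, only yields the trivial $\mathsf{BI}(\mathsf{B}x)y = \mathsf{B}xy$.

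The paper is more economical. It gets (v) from one \textbf{[B]} step and \textbf{[P.1]}, as $\mathsf{B}(\mathsf{BI})(\mathsf{B}x) = \mathsf{B}(\mathsf{B}(\mathsf{BI}))\mathsf{B}x = \mathsf{B}x$. It gets (vi) by applying (v) to $y$ and using \textbf{[B]}. So (v)--(vii) need only \textbf{[B]} and \textbf{[P.1]}.

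Your route instead rests on (i), and your sketch of (i) is wrong. $\mathsf{B}(\mathsf{B}(\mathsf{BB}))\mathsf{B}x$ reduces by \textbf{[B]} to $\mathsf{B}(\mathsf{BB})(\mathsf{B}x)$ and no further. The closed terms $\mathsf{B}(\mathsf{B}(\mathsf{BB}))\mathsf{B}$ and $\mathsf{B}(\mathsf{BB})\mathsf{B}$ have distinct $\beta$-normal forms, $\lambda xyuvw.\,x(yu)(vw)$ and $\lambda xyuv.\,x(y(uv))$, so \textbf{[P.1]}--\textbf{[P.7]} cannot identify them.

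The underlying reason is that \textbf{[B]} (and \textbf{[I]}) preserve the left-to-right order of leaves. Moving $x$ past the trailing $\mathsf{B}$ therefore requires \textbf{[$^\bullet$]}:
\[
  \mathsf{B}(\mathsf{B}(\mathsf{B}x))\mathsf{B} = \mathsf{BB}(\mathsf{BBB})x\mathsf{B} = \mathsf{BB}^\bullet(\mathsf{BB}(\mathsf{BBB}))x .
\]
\textbf{[P.4]} alone, without \textbf{[P.1]}, identifies this with $\mathsf{B}(\mathsf{BB})\mathsf{B}x$. So either cite (i) or use this derivation.
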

\begin{proof}
  \begin{enumerate}[(i)]
  \item holds as in:
    \begin{align*}
      x \vdash \,\,& \mathsf{BB}(\mathsf{B}x)
        \overset{\text{[B]}}{=} \mathsf{B}(\mathsf{BB})\mathsf{B}x
        \overset{\text{[P.4]}}{=} \mathsf{BB}^\bullet(\mathsf{BB}(\mathsf{BBB}))x
        \overset{\text{[B]}}{=} \mathsf{B}^\bullet(\mathsf{BB}(\mathsf{BBB})x)
        \\& \overset{\text{[$^\bullet$]}}{=} \mathsf{BB}(\mathsf{BBB})x\mathsf{B}
        \overset{\text{[B]}}{=} \mathsf{B}(\mathsf{BBB}x)\mathsf{B}
        \overset{\text{[B]}}{=} \mathsf{B}(\mathsf{B}(\mathsf{B}x))\mathsf{B}
    \end{align*}
  \item $x,y \vdash \mathsf{B}(\mathsf{B}xy)
        \overset{\text{[B]}}{=} \mathsf{BB}(\mathsf{B}x)y
        \overset{\text{(i)}}{=} \mathsf{B}(\mathsf{B}(\mathsf{B}x))\mathsf{B}y
        \overset{\text{[B]}}{=} \mathsf{B}(\mathsf{B}x)(\mathsf{B}y)$
  \item $x \vdash \mathsf{B}x\mathsf{I}
      \overset{\text{[$^\bullet$]}}{=} \mathsf{I}^\bullet(\mathsf{B}x)
      \overset{\text{[B]}}{=} \mathsf{B}\mathsf{I}^\bullet\mathsf{B}x
      \overset{\text{[P.5]}}{=} \mathsf{B}\mathsf{I}x$
  
  \item $x,y,z \vdash \mathsf{B}(\mathsf{B}xy)z
        \overset{\text{[B]}}{=} \mathsf{B}(\mathsf{B}x)(\mathsf{B}y)z
        \overset{\text{(ii)}}{=} \mathsf{B}x(\mathsf{B}yz)$
        
  \item $x \vdash \mathsf{B}(\mathsf{BI})(\mathsf{B}x) \overset{\text{[B]}}{=} \mathsf{B}(\mathsf{B}(\mathsf{B}I))\mathsf{B}x \overset{\text{[P.1]}}{=} \mathsf{B}x$
        
  \item $x,y \vdash \mathsf{B}xy \overset{\text{(v)}}{=} \mathsf{B}(\mathsf{BI})(\mathsf{B}x)y \overset{\text{[B]}}{=} \mathsf{BI}(\mathsf{B}xy)$

  \item $x \vdash \mathsf{BI}(\mathsf{B}x) \overset{\text{(v)}}{=} \mathsf{BI}(\mathsf{B}(\mathsf{BI})(\mathsf{B}x)) \overset{\text{(vi)}}{=} \mathsf{B}(\mathsf{BI})(\mathsf{B}x) \overset{\text{(v)}}{=} \mathsf{B}x$
    
  \end{enumerate}
\end{proof}

We introduce some convenient notational conventions for working with planar $\lambda$-algebras. First, given $a,b \in \mathcal{M}(;A)$ we write $a * b \in \mathcal{M}(;A)$ to denote the \emph{point composite} of $a$ and $b$, defined as in:
\begin{mathpar}
  a * b = \bullet \circ (\bullet \circ (\mathsf{B},a),b)

  \text{i.e.,}

  \vdash a * b = \mathsf{B}ab
\end{mathpar}
Moreover, for each $n$ in $\mathbb{N}$ and $a \in \mathcal{M}(;A)$ we define $a^{*n} \in \mathcal{M}(;A)$ as in:
   \begin{mathpar}
    a^{*0} = \mathsf{I}

    \text{and}

    a^{*0} = a^{*n} * a
  \end{mathpar}
Note that Lemma~\ref{lem:lambda-algebra-facts}.(iv) says precisely that $*$ is associative. We may therefore omit parentheses when working with $*$, since for example $a*b*c$ is unambiguous.
  
We may now give the definition of planar $\lambda$-model, which is as follows:
\begin{definition}\label{def:planar-lambda-model}
  An applicative system $(A,\bullet)$ in a multicategory $\mathcal{M}$ is a \emph{planar $\lambda$-model} in case it is a planar $\lambda$-algebra, and moreover for all $a,b \in \mathcal{M}(;A)$ and all $n \in \mathbb{N}$ we have:
  \[
    (x_1,\ldots,x_n \vdash ax_1 \cdots x_n = bx_1 \cdots x_n) \Rightarrow (\vdash \mathsf{B}^{*n}\mathsf{I}a = \mathsf{B}^{*n}\mathsf{I}b)
  \]
  We refer to this property as \emph{weak multi-extensionality}.
\end{definition}

The detail-oriented reader may at this point object that the notion of weak multi-extensionality in Definition~\ref{def:planar-lambda-model} is much stronger than the classical notion of weak extensionality one finds in e.g.,~\cite{Scott1980,Hindley2008}, so that our notion of planar $\lambda$-model does not really agree with the classical notion of $\lambda$-model. All is in fact well, and this difference will be reconciled in Section~\ref{sec:enough-points}.

We proceed to establish the central property of planar $\lambda$-models: in the context of some planar $\lambda$-model $(A,\bullet)$ in some multicategory $\mathcal{M}$, let $a \in \mathcal{M}(;A)$ and $n \in \mathbb{N}$. Say that $a$ is \emph{$n$-flat} in case $\vdash \mathsf{B}^{*n}\mathsf{I}a = a$. Then we have:

\begin{lemma}\label{def:unique-flat-representative}
  Let $(A,\bullet)$ be a planar $\lambda$-model in $\mathcal{M}$, and suppose that $f \in \mathcal{M}(A^n;A)$ is $(A,\bullet)$-computable. Then there is a unique $n$-flat $(A,\bullet)$-representative of $f$.
\end{lemma}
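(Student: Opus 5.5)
The plan is to take any representative $a$ of $f$, replace it by $\mathsf{B}^{*n}\mathsf{I}a$, and then let weak multi-extensionality (Definition~\ref{def:planar-lambda-model}) do almost all the work, for both existence and uniqueness. The only computation needed is a generalised unfolding identity for $\mathsf{B}^{*n}$, proved by induction on $n$.

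\emph{Key identity.} For all $n \in \mathbb{N}$ I claim
\[
  y,z,x_1,\ldots,x_n \vdash \mathsf{B}^{*n}yzx_1\cdots x_n = y(zx_1\cdots x_n).
\]
For $n=0$ this is $\mathsf{I}yz = yz$ by {\bf [I]}. For the inductive step I read the definition of iterated point composite as $a^{*(n+1)} = a^{*n} * a$; if it were instead $a * a^{*n}$, associativity of $*$ (Lemma~\ref{lem:lambda-algebra-facts}.(iv)) lets us pass between the two forms. By {\bf [B]}, $(p*q)y = \mathsf{B}pqy = p(qy)$, so $\mathsf{B}^{*(n+1)}y = \mathsf{B}^{*n}(\mathsf{B}y)$. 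Then
\[
  \mathsf{B}^{*(n+1)}yzx_1\cdots x_{n+1} = \mathsf{B}^{*n}(\mathsf{B}y)zx_1\cdots x_nx_{n+1} = \mathsf{B}y(zx_1\cdots x_n)x_{n+1} = y(zx_1\cdots x_{n+1}),
\]
using the induction hypothesis with $\mathsf{B}y$ in place of $y$, followed by {\bf [B]}. Specialising to $y = \mathsf{I}$ and $z = a$ and applying {\bf [I]} gives $x_1,\ldots,x_n \vdash (\mathsf{B}^{*n}\mathsf{I}a)x_1\cdots x_n = ax_1\cdots x_n$.

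\emph{Existence.} Let $a$ represent $f$ and set $a' = \mathsf{B}^{*n}\mathsf{I}a$. By the key identity $a'$ agrees with $a$ on $n$ arguments, so $a'$ also represents $f$. For flatness, apply weak multi-extensionality to this same equation $a'x_1\cdots x_n = ax_1\cdots x_n$. This yields $\mathsf{B}^{*n}\mathsf{I}a' = \mathsf{B}^{*n}\mathsf{I}a = a'$, so $a'$ is $n$-flat.

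\emph{Uniqueness.} Suppose $a'$ and $b'$ are both $n$-flat representatives of $f$. Then $x_1,\ldots,x_n \vdash a'x_1\cdots x_n = f(x_1,\ldots,x_n) = b'x_1\cdots x_n$. Weak multi-extensionality gives $\mathsf{B}^{*n}\mathsf{I}a' = \mathsf{B}^{*n}\mathsf{I}b'$, and flatness turns this into $a' = b'$.

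The only step needing care is the induction for the key identity. The point to get right is to generalise over the head $y$ rather than fixing $y = \mathsf{I}$, since otherwise the hypothesis does not apply at the next stage. The rest is a direct application of the definitions.
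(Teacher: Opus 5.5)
Your proof is correct and is essentially the paper's argument. You replace a representative $a$ by $\mathsf{B}^{*n}\mathsf{I}a$, apply weak multi-extensionality to $x_1,\ldots,x_n \vdash \mathsf{B}^{*n}\mathsf{I}ax_1\cdots x_n = ax_1\cdots x_n$ to get flatness, and apply it again together with flatness for uniqueness. The one addition is that you prove the unfolding identity by induction, generalising over the head $y$, whereas the paper simply asserts it.

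A minor aside on your hedge: under the alternative reading $a^{*(n+1)} = a * a^{*n}$, associativity alone does not identify the two forms, since you would also need Lemma~\ref{lem:lambda-algebra-facts}.(iii) to move $\mathsf{I}$ across. However, your induction also goes through directly under that reading by substituting $zx_1$ for $z$, so nothing depends on it.
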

\begin{proof}
  Notice that:
  \[
    y,x,x_1,\ldots,x_n \vdash \mathsf{B}^{*n}yxx_1 \cdots x_n = y(xx_1 \cdots x_n)
  \]
  and so in particular for all $a \in \mathcal{M}(;A)$ we have:
  \[
    x_1,\ldots,x_n \vdash \mathsf{B}^{*n}\mathsf{I}ax_1 \cdots x_n = ax_1 \cdots x_n
  \]
  It follows that if $a$ represents $f$, so does $\vdash\mathsf{B}^{*n}\mathsf{I}a \in \mathcal{M}(;A)$. Moreover, we have:
  \[
    x_1,\ldots,x_n \vdash \mathsf{B}^{*n}\mathsf{I}ax_1\cdots x_n = ax_1\cdots x_n
  \]
  and so by weak multi-extensionality we have $\vdash \mathsf{B}^{*n}\mathsf{I}(\mathsf{B}^{*n}\mathsf{I}a) = \mathsf{B}^{*n}\mathsf{I}a$, which is to say that $\vdash \mathsf{B}^{*n}\mathsf{I}a$ is $n$-flat. Thus, there is at least one $n$-flat representative of $f$. If $a \in \mathcal{M}(;A)$ and $b \in \mathcal{M}(;A)$ are both $n$-flat representatives of $f$ then we have:
  \[
    x_1,\ldots,x_n \vdash ax_1\cdots x_n = f(x_1,\ldots,x_n) = bx_1\cdots x_n
  \]
  and so by weak multi-extensionality we have that $\vdash \mathsf{B}^{*n}\mathsf{I}a = \mathsf{B}^{*n}\mathsf{I} b$, and then since $a$ and $b$ are $n$-flat we have $\vdash a = \mathsf{B}^{*n}\mathsf{I}a = \mathsf{B}^{*n}\mathsf{I}b = b$. The claim follows.
\end{proof}
Note that, while different choices of $\mathsf{B}$ and $\mathsf{I}$ may result in different notions of $n$-flat, any such choice determines a unique $n$-flat representative of each computable morphism. As such, any choice of $\mathsf{B}$ and $\mathsf{I}$ allows us to prove Lemma~\ref{lem:computable-semi-closed}, which is the point of all this and, notably, is not sensitive to our choice of $\mathsf{B}$ and $\mathsf{I}$.

In the context of a planar $\lambda$-model $(A,\bullet)$, we will write $a \looparrowright f \in \mathcal{M}(A^n;A)$ to indicate that $a \in \mathcal{M}(;A)$ is the unique $n$-flat representative of $f$. Note that $a$ and $f$ determine each other, so that if one also has $a \looparrowright g \in \mathcal{M}(A^n;A)$ then it follows that $f = g$, and similarly if one has $b \looparrowright f$ then it follows that $a = b$.

We use this machinery to obtain a version of Lemma~\ref{lem:computable-maps-operad} for $\lambda$-models:
\begin{lemma}\label{lem:computable-semi-closed}
  Let $(A,\bullet)$ be a planar $\lambda$-model in $\mathcal{M}$. Then the operad $R(\mathcal{M},A,\bullet)$ of Lemma~\ref{lem:computable-maps-operad} is semi-closed.
\end{lemma}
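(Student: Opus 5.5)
The plan is to equip $R(\mathcal{M},A,\bullet)$ with an explicit choice of $\Lambda$ built from the unique flat representatives of Lemma~\ref{def:unique-flat-representative}, and then to verify the naturality condition (\ref{eq:semi-closed-naturality}) using weak multi-extensionality. Since $R(\mathcal{M},A,\bullet)$ is an operad, we have $[*,*]=*$, and we take $\mathsf{ev} = \overline{\bullet}$. This is computable with representative $\mathsf{I}$, because $x,y \vdash \mathsf{I}xy = xy$.

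\textbf{Definition of $\Lambda$.} For a computable $f \in \mathcal{M}(A^{n+1};A)$, let $a \looparrowright f$ and set
\[
\Lambda^n(\overline{f}) = \overline{x_1,\ldots,x_n \vdash a x_1\cdots x_n}.
\]
This morphism is computable with representative $a$. The $\beta$-law $\overline{\bullet}\circ(\Lambda^n(\overline f),1) = \overline f$ is then immediate, since $x_1,\ldots,x_n,z \vdash a x_1\cdots x_n z = f(x_1,\ldots,x_n,z)$.

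\textbf{Key auxiliary fact.} If $a$ is $(n+1)$-flat, then
\[
x_1,\ldots,x_n \vdash \mathsf{BI}(a x_1\cdots x_n) = a x_1 \cdots x_n .
\]
To prove this, I would write $\mathsf{B}^{*(n+1)} = \mathsf{B}^{*n} * \mathsf{B}$. Associativity of $*$ (Lemma~\ref{lem:lambda-algebra-facts}.(iv)) lets us unfold this, and the identity $\mathsf{B}^{*n}yxx_1\cdots x_n = y(xx_1\cdots x_n)$ from the proof of Lemma~\ref{def:unique-flat-representative} then gives
\[
\mathsf{B}^{*(n+1)}\mathsf{I}a x_1\cdots x_n = \mathsf{BI}(a x_1\cdots x_n).
\]
Flatness of $a$ rewrites the left-hand side as $a x_1\cdots x_n$. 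Checking this unfolding of $\mathsf{B}^{*(n+1)}$, and the edge cases $n=0$ and $n=1$ (where Lemma~\ref{lem:lambda-algebra-facts}.(iii,vii) may be needed), is routine but fiddly.

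\textbf{Naturality (the main obstacle).} Let $g \in \mathcal{M}(A^{n+1};A)$ be computable with $c \looparrowright g$. Let computable $f_i$ have arities $m_i$, and put $m = \sum_i m_i$. Let $d \looparrowright g\circ(f_1,\ldots,f_n,1_A)$. With $\vec y$ the concatenated variables, we must show
\[
\vec y \vdash d\,\vec y = c\, f_1(\vec y_1)\cdots f_n(\vec y_n).
\]
The difficulty is that weak multi-extensionality only compares closed points, whereas both sides here contain free variables. To get around this, I would first note that $\vec y \mapsto c\, f_1(\vec y_1)\cdots f_n(\vec y_n)$ is a composite of computable maps, so by Lemma~\ref{lem:computable-maps-operad} it has some representative $e \in \mathcal{M}(;A)$. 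Then
\[
\vec y, z \vdash e\,\vec y\, z = c\,f_1(\vec y_1)\cdots f_n(\vec y_n)\,z = g(f_1(\vec y_1),\ldots,f_n(\vec y_n),z) = d\,\vec y\, z ,
\]
so weak multi-extensionality at arity $m+1$ gives $\mathsf{B}^{*(m+1)}\mathsf{I}e = \mathsf{B}^{*(m+1)}\mathsf{I}d = d$, the last step by flatness of $d$. Applying both sides to $\vec y$ and using the unfolding identity yields
\[
d\,\vec y = \mathsf{BI}(e\,\vec y) = \mathsf{BI}\bigl(c\,f_1(\vec y_1)\cdots f_n(\vec y_n)\bigr).
\]
Finally, the auxiliary fact for the $(n+1)$-flat $c$, substituted at $x_i := f_i(\vec y_i)$ by cut, removes the outer $\mathsf{BI}$. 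This gives exactly $\Lambda(\overline g)\circ(\overline{f_1},\ldots,\overline{f_n})$, which establishes (\ref{eq:semi-closed-naturality}).
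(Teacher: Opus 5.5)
Your proposal is correct and follows essentially the paper's proof. It uses the same $\Lambda$: the paper's $\mathsf{B}^{*n}\mathsf{I}a$ represents exactly your $\vec x \vdash a\vec x$. It applies weak multi-extensionality at arity $m+1$ against the flat representative $d$ of $g\circ(f_1,\ldots,f_n,1_A)$, and then uses $(n+1)$-flatness of $c$ to strip the outer $\mathsf{BI}$. The only difference is cosmetic: you compare against an arbitrary representative $e$ of $\Lambda(\overline g)\circ(\overline{f_1},\ldots,\overline{f_n})$, where the paper uses its flat representative $k$, and this changes nothing in the argument.
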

\begin{proof}
  Take $\mathsf{ev} \in R(\mathcal{M},A,\bullet)(2)$ to be the morphism given by $\bullet \in \mathcal{M}(A,A;A)$, so that $\mathsf{ev} = \overline{\bullet}$. For any $\overline{f} \in R(\mathcal{M},A,\bullet)(n+1)$, let $a \looparrowright f$ and define $\Lambda^{A^n}(f) \in \mathcal{M}(A^n;A)$ by $\vdash \mathsf{B}^{*n}\mathsf{I}a \looparrowright \Lambda^{A^n}(f)$. Now define $\Lambda^n(\overline{f}) \in R(\mathcal{M},A,\bullet)(n)$ as in $\Lambda^n(\overline{f}) = \overline{\Lambda^{A^n}(f)}$.

  We have $\mathsf{bullet} \circ (\Lambda^{A^n}(f),1_A) = f$ as in:
  \begin{align*}
    x_1,\ldots,x_n,x \vdash\,\,& (\mathsf{\bullet} \circ (\Lambda^{A^n}(f),1_A))(x_1,\ldots,x_n,x)
    = \Lambda^{A^n}(f)(x_1,\ldots,x_n)x
    \\&= \mathsf{B}^{*n}\mathsf{I}ax_1 \cdots x_nx
    = ax_1\cdots x_nx
    = f(x_1,\ldots,x_n,x)
  \end{align*}
  which gives:
  \begin{align*}
    & \mathsf{ev} \circ (\Lambda^n(\overline{f}),1)
      = \overline{\bullet} \circ (\overline{\Lambda^{A^n}(f)},\overline{1_A})
      = \overline{\bullet \circ (\Lambda^{A^n}(f),1_A)}
      = \overline{f}
  \end{align*}
  and so $R(\mathcal{M},A,\bullet)$ is weakly-closed. We proceed to show that it is semi-closed. To that end, suppose that $\overline{f} \in R(\mathcal{M},A,\bullet)(n+1)$ with $a \looparrowright f \in \mathcal{M}(A^n;A)$ and that $\left(\overline{g_i} \in R(\mathcal{M},A,\bullet)(m_i) \right)_{i=1}^n$ with $\left(b_i \looparrowright g \in \mathcal{M}(A^{m_i};A)\right)_{i=1}^n$. Define $h \in \mathcal{M}(;A)$ by:
  \[
    h \looparrowright f \circ (g_1,\ldots,g_n,1_A) \in \mathcal{M}(A^{p+1};A)
  \]
  where $p = \sum_{i=1}^n m_i$. Note that then $\vdash \mathsf{B}^{*p}\mathsf{I}h \looparrowright \Lambda^{A^p}(f \circ (g_1,\ldots,g_n,1_A)) \in \mathcal{M}(A^p;A)$. Next, define $k \in \mathcal{M}(;A)$ by:
  \[
    k \looparrowright \Lambda^{A^n}(f) \circ (g_1,\ldots,g_n) \in \mathcal{M}(A^p;A)
  \]
  We will repeatedly use the fact that:
  \begin{align*}
    & x_1^1,\ldots,x_n^{m_n} \vdash
      k x_1^1 \cdots x_n^{m_n}
      = \Lambda^{A^n}(f)(g_1(x_1^1,\ldots,x_1^{m_1}),\ldots,g_n(x_n^1,\ldots,x_n^{m_n}))
      \\&= \mathsf{B}^{*n}\mathsf{I}a(b_1x_1^1 \cdots x_1^{m_1}) \cdots (b_nx_n^1 \cdots x_n^{m_n})
      = a(b_1x_1^1 \cdots x_1^{m_1}) \cdots (b_n x_n^1 \cdots x_n^{m_n})
  \end{align*}
  First, it follows that:
  \begin{align*}
    & x_1^1,\ldots,x_n^{m_n},x \vdash h x_1^1 \cdots x_n^{m_n} x 
      = f(g_1(x_1^1,\ldots,x_1^{m_1}),\ldots,g_n(x_n^1,\ldots,x_n^{m_n}),x)
    \\&= a(b_1x_1^1 \cdots x_1^{m_1}) \cdots (b_n x_n^1 \cdots x_n^{m_n})x
    = kx_1^1 \cdots x_n^{m_n} x
  \end{align*}
  and so weak multi-extensionality gives $\vdash \mathsf{B}^{*p+1}\mathsf{I}h = \mathsf{B}^{*p+1}\mathsf{I}k \in \mathcal{M}(;A)$. Using this, we obtain:
  \begin{align*}
    & x_1^1 ,\ldots, x_n^{m_n} \vdash \mathsf{B}^{*p}\mathsf{I}h x_1^1 \cdots x_n^{m_n}
    = hx_1^1 \cdots x_n^{m_n}
    = \mathsf{B}^{*p+1}\mathsf{I}hx_1^1\cdots x_n^{m_n}
    \\&= \mathsf{B}^{*p+1}\mathsf{I}kx_1^1 \cdots x_n^{m_n}
    = \mathsf{B}^{*p}(\mathsf{BI})kx_1^1 \cdots x_n^{m_n}
    = \mathsf{BI}(kx_1^1 \cdots x_n^{m_n})
    \\&= \mathsf{BI}(a(b_1x_1^1 \cdots x_n^{m_n}) \cdots (b_nx_n^1 \cdots x_n^{m_n}))
    \\&= \mathsf{B}^{*n}(\mathsf{BI})a(b_1x_1^1 \cdots x_n^{m_n}) \cdots (b_nx_n^1 \cdots x_n^{m_n})
    \\&= \mathsf{B}^{*n+1}a(b_1x_1^1 \cdots x_n^{m_n}) \cdots (b_nx_n^1 \cdots x_n^{m_n})
    \\&= a(b_1x_1^1 \cdots x_n^{m_n}) \cdots (b_nx_n^1 \cdots x_n^{m_n})
    \\&= kx_1^1\cdots x_n^{m_n}
  \end{align*}
  That is, $k$ and $\mathsf{B}^{*p}\mathsf{I}h$ represent the same morphism of $\mathcal{M}(A^p;A)$, which is to say that $h = k$ and $\Lambda^{A^n}(f) \circ (g_1,\ldots,g_n) = \Lambda^{A^p}(f \circ (g_1,\ldots,g_n,1_A))$. Then we have:
  \begin{align*}
    &\Lambda^n(\overline{f}) \circ (\overline{g_1},\ldots,\overline{g_n})
      = \overline{\Lambda^{A^n}(f)} \circ (\overline{g_1},\ldots,\overline{g_n})
    \\&= \overline{\Lambda^{A^n}(f) \circ (g_1,\ldots,g_n)}
    = \overline{\Lambda^{A^p}(f \circ (g_1,\ldots,g_n,1_A))}
    \\&= \Lambda^p(\overline{f\circ (g_1,\ldots,g_n,1_A)})
    = \Lambda^p(\overline{f} \circ (\overline{g_1},\ldots,\overline{g_n},1))
  \end{align*}
  and so $R(\mathcal{M},A,\bullet)$ is semi-closed.
\end{proof}
Note that Lemma~\ref{lem:computable-semi-closed} explains how, exactly, every planar $\lambda$-model is a model of the planar $\lambda$-calculus (i.e., the initial semi-closed multicategory).

We show that reflexive objects in semi-closed multicategories define $\lambda$-models:
\begin{lemma}\label{lem:semi-closed-reflexive}
  Let $\mathcal{M}$ be a semi-closed multicategory, and let $(U,s,r)$ be a reflexive object in $\mathcal{M}$. Define $\bullet \in \mathcal{M}(U,U;U)$ by $\bullet = \mathsf{ev}_{U,U} \circ (r,1_U)$. Then $(U,\bullet)$ is a planar $\lambda$-model in $\mathcal{M}$.
\end{lemma}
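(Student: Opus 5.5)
The plan is to reuse the planar combinatory algebra structure of Lemma~\ref{lem:reflexive-weakly-closed}, with $\mathsf{I} = s\circ\Lambda(1_U)$, $\mathsf{B} = s\circ\Lambda(s\circ\Lambda(s\circ\Lambda(\bullet\circ(1_U,\bullet))))$ and $a^\bullet = s\circ\Lambda(\bullet\circ(1_U,a))$. It then remains to verify {\bf [P.1--7]} and weak multi-extensionality, and I expect the semi-closed naturality~(\ref{eq:semi-closed-naturality}) to do essentially all the work. For $f\in\mathcal{M}(U^{n};U)$ with $n \geq 1$, define a canonical point $\ulcorner f\urcorner\in\mathcal{M}(;U)$ by iterated abstraction: $\ulcorner f\urcorner = s\circ\Lambda(s\circ\Lambda(\cdots s\circ\Lambda(f)))$, with $n$ occurrences of $s\circ\Lambda$. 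Since $r\circ s=1$ and $\mathsf{ev}\circ(\Lambda(g),1)=g$, the point $\ulcorner f\urcorner$ represents $f$. Because $\ulcorner-\urcorner$ is a function of $f$, equal morphisms have equal canonical points.

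The first key step is a ``$\beta$-substitution'' fact obtained from naturality. For $g\in\mathcal{M}(\Gamma,U;U)$ and $h\in\mathcal{M}(\Delta;U)$ placed in one of the $\Gamma$-slots, we have $(s\circ\Lambda(g))\circ(\ldots,h,\ldots)=s\circ\Lambda(g\circ(\ldots,h,\ldots,1_U))$. Applying a point of the form $s\circ\Lambda(g)$ to an argument $x$ gives $\bullet\circ(s\circ\Lambda(g),x)=\mathsf{ev}\circ(\Lambda(g),x)=g\circ(1,x)$. So partial applications of canonical points are again canonical points of the substituted morphism. Concretely, I would show $x,y\vdash \mathsf{B}xy = s\circ\Lambda(x(yz))$, where the abstraction is over $z$, with $x,y$ as parameters. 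Similarly $a^\bullet = \ulcorner xa\urcorner$ and $\mathsf{I}=\ulcorner 1_U\urcorner$. By induction on $n$ this yields $\vdash \mathsf{B}^{*n}\mathsf{I}a = \ulcorner \bullet^n\circ(a,1_U,\ldots,1_U)\urcorner$, and this is exactly where the main obstacle lives. The naive computation produces terms like $\mathsf{B}\mathsf{I}y = s\circ\Lambda(\mathsf{ev}\circ(r,1)\circ(y,1)) = s\circ\Lambda(\mathsf{ev}\circ(r,1))\circ y$. Since semi-closure does not give $\eta$, i.e.\ $\Lambda(\mathsf{ev}\circ(r,1))\neq r$ in general, this need not equal $y$. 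The fix I would try first is to track that every $y$ arising in the induction is itself of the form $s\circ\Lambda(g)$. Then $\mathsf{ev}\circ(r,1)\circ(s\circ\Lambda(g),1)=g$, and the unwanted $s\circ r$ collapses by $r\circ s=1$. So the induction hypothesis should be stated for points in the image of $s$.

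Given the formula $\mathsf{B}^{*n}\mathsf{I}a = \ulcorner \bullet^n\circ(a,1_U,\ldots,1_U)\urcorner$, weak multi-extensionality is immediate. If $x_1,\ldots,x_n\vdash ax_1\cdots x_n=bx_1\cdots x_n$ then the two morphisms coincide, hence so do their canonical points. For the axioms, I would observe that each side of {\bf [P.1,2,4,5,6,7]} is a point of the form $\mathsf{B}uv$, $\mathsf{I}$ or $c^\bullet$, hence of the form $s\circ\Lambda(g)$ for a unary $g$. Two such points agree as soon as their $g$'s agree. Each $g$ is again built from $\mathsf{B}$'s, so one unfolds again using the substitution fact, and after at most three steps both sides reduce to the same application term in the variables. {\bf [P.3]} was shown redundant above. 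The bookkeeping throughout consists only of repeated use of naturality and $r\circ s = 1$, always on arguments lying in the image of $s$, so I do not anticipate difficulty beyond the $\eta$ issue isolated above.
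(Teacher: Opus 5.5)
Your proposal is correct and is essentially the paper's proof. The paper uses the same $\mathsf{I}$, $\mathsf{B}$, $a^\bullet$ and verifies {\bf [P.1,2,4--7]} by unfolding $s\circ\Lambda(-)$ with the naturality law and $r\circ s=1$ (your level-by-level comparison of abstraction bodies is a tidier packaging of its long equational chains), and it derives weak multi-extensionality from exactly your formula $\vdash\mathsf{B}^{*n}\mathsf{I}a=(s\circ\Lambda)^{\circ n}(\bullet^n\circ(a,1_U,\ldots,1_U))$.

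The paper gets that formula by first proving $\vdash\mathsf{B}^{*n}\mathsf{I}=(s\circ\Lambda)^{\circ n+1}(\bullet^n)$ by induction and then applying it to $a$. This shows that $r\circ s=1$ is only ever needed on combinators visibly in the image of $s$, never on $a$ itself; so you should not restrict $a$ to the image of $s$, since weak multi-extensionality quantifies over all $a,b\in\mathcal{M}(;U)$.
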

\begin{proof}
  As in the proof of Lemma~\ref{lem:reflexive-weakly-closed}, we take $\mathsf{I} = s \circ \Lambda(1_U)$, $a^\bullet = s \circ \Lambda(\bullet \circ (1_U,a))$ for all $a \in \mathcal{M}(;A)$, and $\mathsf{B} = s \circ \Lambda(B_1)$ where $B_1 = s \circ \Lambda(B_0)$ and $B_0 = s \circ \Lambda(\bullet \circ (1,\bullet))$. The proof of Lemma~\ref{lem:reflexive-weakly-closed} works here as well, and so we have that $(U,\bullet)$ is a planar combinatory algebra.

  We proceed to show that our data satisfies the equations of a planar $\lambda$-algebra. We begin with {\bf [P.1]}, which holds as in:
    \begin{align*}
      & (\vdash \mathsf{B}(\mathsf{B}(\mathsf{BI}))\mathsf{B})
        = \bullet \circ (\bullet (\mathsf{B}, \bullet \circ (\mathsf{B}, \bullet \circ (\mathsf{B},\mathsf{I}))), \mathsf{B})
        = \bullet \circ (\bullet (\mathsf{B}, \bullet \circ (\mathsf{B}, \mathsf{B}_1 \circ \mathsf{I})), \mathsf{B})
      \\&= \bullet \circ (\bullet (\mathsf{B}, \mathsf{B}_1 \circ \mathsf{B}_1 \circ \mathsf{I}), \mathsf{B})
      = \bullet \circ (\mathsf{B}_1 \circ \mathsf{B}_1 \circ \mathsf{B}_1 \circ \mathsf{I}, \mathsf{B})
      = \mathsf{B}_0 \circ (\mathsf{B}_1 \circ \mathsf{B}_1 \circ \mathsf{I},\mathsf{B})
      \\&= s \circ \Lambda(\bullet \circ (1,\bullet)) \circ (\mathsf{B}_1 \circ \mathsf{B}_1 \circ \mathsf{I},\mathsf{B})
      = s \circ \Lambda(\bullet \circ (1,\bullet) \circ (\mathsf{B}_1 \circ \mathsf{B}_1 \circ  \mathsf{I},\mathsf{B},1))
      \\&= s \circ \Lambda(\bullet \circ (\mathsf{B}_1 \circ \mathsf{B}_1 \circ \mathsf{I}, \bullet \circ (\mathsf{B},1)))
      = s \circ \Lambda(\bullet \circ (\mathsf{B}_1 \circ \mathsf{B}_1 \circ \mathsf{I}, \mathsf{B}_1))
      \\&= s \circ \Lambda(\mathsf{B}_0 \circ (\mathsf{B}_1 \circ \mathsf{I} ,\mathsf{B}_1))
      = s \circ \Lambda(s \circ \Lambda(\bullet \circ (\mathsf{B}_1 \circ \mathsf{I}, \bullet \circ (\mathsf{B}_1,1))))
      \\&= s \circ \Lambda(s \circ \Lambda(\mathsf{B}_0 \circ (\mathsf{I}, \mathsf{B}_0)))
      = s \circ \Lambda(s \circ \Lambda(s \circ \Lambda(\bullet \circ (\mathsf{I},\bullet \circ (\mathsf{B}_0,1)))))
      \\&= s \circ \Lambda(s \circ \Lambda(s \circ \Lambda(\bullet \circ (1,\bullet))))
       = s \circ \Lambda(s \circ \Lambda(\mathsf{B}_0))
      = s \circ \Lambda(\mathsf{B}_1)
      = \mathsf{B}
    \end{align*}
For {\bf [P.2]}, we have:
\begin{align*}
  & (\vdash \mathsf{BII})
    = \bullet \circ (s \circ \Lambda(s \circ \Lambda(\bullet)),\mathsf{I})
    = s \circ \Lambda(\bullet) \circ \mathsf{I}
    = s \circ \Lambda(\bullet \circ (\mathsf{I},1_U))
    = s \circ \Lambda(1_U)
    = \mathsf{I}
\end{align*}
For {\bf [P.4]}, we have:
\begin{align*}
  & (\vdash \mathsf{BB}^\bullet(\mathsf{BB}(\mathsf{BBB})))
    = \bullet \circ (\bullet \circ (\mathsf{B},\mathsf{B}^\bullet),\bullet \circ (\bullet \circ (\mathsf{B},\mathsf{B}), \bullet  \circ (\bullet \circ (\mathsf{B},\mathsf{B}),\mathsf{B})))
      \\&= \mathsf{B}_0 \circ (\mathsf{B}^\bullet, \bullet \circ (\bullet \circ (\mathsf{B},\mathsf{B}), \bullet \circ (\bullet \circ (\mathsf{B},\mathsf{B}), \mathsf{B})))
      = \mathsf{B}_0 \circ (\mathsf{B}^\bullet, \bullet \circ (\mathsf{B}_1 \circ \mathsf{B}, \bullet \circ (\mathsf{B}_1 \circ \mathsf{B}, \mathsf{B})))
      \\&= \mathsf{B}_0 \circ (\mathsf{B}^\bullet, \bullet \circ (\mathsf{B}_1 \circ \mathsf{B}, \mathsf{B}_0 \circ (\mathsf{B},\mathsf{B})))
      = \mathsf{B}_0 \circ (\mathsf{B}^\bullet, \mathsf{B}_0 \circ (\mathsf{B}, \mathsf{B}_0 \circ (\mathsf{B},\mathsf{B})))
      \\& = s \circ \Lambda(\bullet \circ (\mathsf{B}^\bullet,\bullet \circ (\mathsf{B}_0 \circ (\mathsf{B}, \mathsf{B}_0 \circ (\mathsf{B},\mathsf{B})),1)))
      = s \circ \Lambda(\bullet \circ (\mathsf{B}^\bullet,\bullet \circ (\mathsf{B},\bullet \circ (\mathsf{B}_0 \circ (\mathsf{B},\mathsf{B}),1))))
      \\&= s \circ \Lambda(\bullet \circ (\mathsf{B}^\bullet, \bullet \circ (\mathsf{B},\bullet \circ (\mathsf{B},\bullet \circ (\mathsf{B},1)))))
      = s \circ \Lambda(\bullet \circ (\mathsf{B}^\bullet, \mathsf{B}_1 \circ \mathsf{B}_1 \circ \mathsf{B}_1))
      \\&= s \circ \Lambda(\bullet \circ (\mathsf{B}_1 \circ \mathsf{B}_1 \circ \mathsf{B}_1, \mathsf{B}))
      = s \circ \Lambda(\mathsf{B}_0 \circ (\mathsf{B}_1 \circ \mathsf{B}_1,\mathsf{B}))
      \\&= s \circ \Lambda(s \circ \Lambda(\bullet \circ (\mathsf{B}_1 \circ \mathsf{B}_1, \bullet \circ (\mathsf{B},1))))
      = s \circ \Lambda(s \circ \Lambda(\bullet \circ (\mathsf{B}_1 \circ \mathsf{B}_1, \mathsf{B}_1)))
      \\&= s \circ \Lambda(s \circ \Lambda(\bullet \circ (\mathsf{B}_1 \circ \mathsf{B}_1, \mathsf{B}_1)))
      = s \circ \Lambda(s \circ \Lambda(\mathsf{B}_0 \circ (\mathsf{B}_1, \mathsf{B}_1)))
      \\&= s \circ \Lambda(s \circ \Lambda(s \circ \Lambda(\bullet \circ (\mathsf{B}_1,\bullet \circ (\mathsf{B}_1,1)))))
      = s \circ \Lambda(s \circ \Lambda(s \circ \Lambda(\mathsf{B}_0 \circ (1,\mathsf{B}_0))))
      \\&= s \circ \Lambda(s \circ \Lambda(s \circ \Lambda(s \circ \Lambda(\bullet \circ (1,\bullet \circ (\mathsf{B}_0,1))))))
      \\&= s \circ \Lambda(s \circ \Lambda(s \circ \Lambda(s \circ \Lambda(\bullet \circ (1,\bullet \circ (1,\bullet))))))
      \\&= s \circ \Lambda(s \circ \Lambda(s \circ \Lambda(s \circ \Lambda( \bullet \circ (1,\bullet) \circ (1,1,\bullet)  ))))
      \\&= s \circ \Lambda(s \circ \Lambda(s \circ \Lambda(s \circ \Lambda( \bullet \circ (\mathsf{B}_0,1) \circ (1,1,\bullet)  ))))
      \\&= s \circ \Lambda(s \circ \Lambda(s \circ \Lambda(s \circ \Lambda( \bullet \circ (1,\bullet) \circ (\mathsf{B}_0,1,1)  ))))
      \\&= s \circ \Lambda(s \circ \Lambda(s \circ \Lambda(s \circ \Lambda( \bullet \circ (1,\bullet) ) \circ (\mathsf{B}_0,1) )))
      = s \circ \Lambda(s \circ \Lambda(s \circ \Lambda(\mathsf{B}_0 \circ (\mathsf{B}_0,1) )))
      \\&= s \circ \Lambda(s \circ \Lambda(s \circ \Lambda(\mathsf{B}_0) \circ \mathsf{B}_0))
      = s \circ \Lambda(s \circ \Lambda(\mathsf{B}_1 \circ \mathsf{B}_0))
      = s \circ \Lambda(s \circ \Lambda(  \bullet \circ (\mathsf{B},\bullet \circ (\mathsf{B}_1,1))  ))
      \\&= s \circ \Lambda(s \circ \Lambda(  \bullet \circ (1,\bullet) \circ (\mathsf{B},\mathsf{B}_1,1)  ))
      = s \circ \Lambda(s \circ \Lambda(  \bullet \circ (1,\bullet) ) \circ (\mathsf{B},\mathsf{B}_1)  )
      \\&= s \circ \Lambda(\mathsf{B}_0 \circ (\mathsf{B},\mathsf{B}_1))
      = s \circ \Lambda(\bullet \circ (\mathsf{B}_1 \circ \mathsf{B},\bullet \circ (\mathsf{B},1)) )
      = s \circ \Lambda(\bullet \circ (1,\bullet) \circ (\mathsf{B}_1 \circ \mathsf{B}, \mathsf{B},1))
      \\&= s \circ \Lambda(\bullet \circ (1,\bullet)) \circ (\mathsf{B}_1 \circ \mathsf{B}, \mathsf{B})
      = \mathsf{B}_0 \circ (\mathsf{B}_1 \circ \mathsf{B}, \mathsf{B})
      = \bullet \circ (\mathsf{B}_1 \circ \mathsf{B}_1 \circ \mathsf{B},\mathsf{B})
      \\&= \bullet \circ (\bullet \circ (\mathsf{B},\mathsf{B}_1 \circ \mathsf{B}),\mathsf{B})
      = \bullet \circ (\bullet \circ (\mathsf{B},\bullet \circ (\mathsf{B},\mathsf{B}),\mathsf{B}))
  = (\vdash \mathsf{B}(\mathsf{BB})\mathsf{B})
\end{align*}
For {\bf [P.5]}, we have:
\begin{align*}
  & (\vdash \mathsf{BI}^\bullet\mathsf{B})
    = \bullet \circ (\bullet \circ (\mathsf{B},\mathsf{I}^\bullet),\mathsf{B})
    = \bullet \circ (\mathsf{B}_1 \circ \mathsf{I}^\bullet,\mathsf{B})
    = \mathsf{B_0} \circ (\mathsf{I}^\bullet,\mathsf{B})
  \\&= s \circ \Lambda(\bullet \circ (\mathsf{I}^\bullet, \bullet \circ (\mathsf{B},1)))
  = s \circ \Lambda(\bullet \circ (\bullet \circ (\mathsf{B},1),\mathsf{I}))
  = s \circ \Lambda(\bullet \circ (\mathsf{B}_1,\mathsf{I}))
  \\&= s \circ \Lambda(\mathsf{B}_0 \circ (1,\mathsf{I}))
  = s \circ \Lambda(s \circ \Lambda(  \bullet \circ (1,\bullet \circ (\mathsf{I},1))  ))
  = s \circ \Lambda(s \circ \Lambda( \bullet ))
  \\&= s \circ \Lambda(s \circ \Lambda(\bullet \circ (\mathsf{I},\bullet)))
  = s \circ \Lambda(\mathsf{B}_0 \circ (\mathsf{I} ,1))
  = s \circ \Lambda(\mathsf{B}_0) \circ \mathsf{I}
  = \mathsf{B}_1 \circ \mathsf{I}
  \\&= \bullet \circ (\mathsf{B},\mathsf{I})
  = (\vdash \mathsf{BI})
\end{align*}
For {\bf [P.6]}, we have:
\begin{align*}
  & (\vdash \mathsf{B}a^{\bullet\bullet}\mathsf{B})
    = \bullet \circ (\bullet \circ (\mathsf{B},(a^\bullet)^\bullet),\mathsf{B})
    = \bullet \circ (\mathsf{B}_1 \circ (a^\bullet)^\bullet,\mathsf{B})
    = \mathsf{B}_0 \circ ((a^\bullet)^\bullet,\mathsf{B})
  \\&= s \circ \Lambda(\bullet \circ ((a^\bullet)^\bullet,\bullet \circ (\mathsf{B},1)))
  = s \circ \Lambda(\bullet \circ ((a^\bullet)^\bullet,\mathsf{B}_1))
  = s \circ \Lambda(\bullet \circ (\mathsf{B}_1,a^\bullet))
  \\&= s \circ \Lambda(\mathsf{B}_0 \circ (1,a^\bullet))
  = s \circ \Lambda(s \circ \Lambda( \bullet \circ (1,\bullet \circ (a^\bullet,1))))
  \\&= s \circ \Lambda(s \circ \Lambda( \bullet \circ (1,\bullet \circ (1,a))))
  = s \circ \Lambda(s \circ \Lambda( \bullet \circ (\mathsf{B}_0,1) \circ (1,1,a)))
  \\&= s \circ \Lambda(s \circ \Lambda( \bullet \circ (1,a) \circ \mathsf{B}_0))
  = s \circ \Lambda(s \circ \Lambda( \bullet \circ (a^\bullet,1) \circ \mathsf{B}_0))
  \\&= s \circ \Lambda(s \circ \Lambda( \bullet \circ (a^\bullet,1) \circ \bullet \circ (\mathsf{B}_1,1)))
  = s \circ \Lambda(s \circ \Lambda( \bullet \circ (1,\bullet) \circ (a^\bullet,\mathsf{B}_1,1)))
  \\&= s \circ \Lambda(s \circ \Lambda(\bullet \circ (1,\bullet)) \circ (a^\bullet,\mathsf{B}_1))
  = s \circ \Lambda(\mathsf{B_0} \circ (a^\bullet,\mathsf{B}_1))
  \\&= s \circ \Lambda(\bullet \circ (\mathsf{B}_1 \circ a^\bullet,\bullet \circ (\mathsf{B},1)))
  = s \circ \Lambda(\bullet \circ (1,\bullet) \circ (\mathsf{B}_1 \circ a^\bullet,\mathsf{B},1))
  \\&= s \circ \Lambda(\bullet \circ (1,\bullet)) \circ (\mathsf{B}_1 \circ a^\bullet,\mathsf{B})
  = \mathsf{B}_0 \circ (\mathsf{B}_1 \circ a^\bullet,\mathsf{B})
  = \bullet \circ (\mathsf{B}_1 \circ \mathsf{B}_1 \circ a^\bullet,\mathsf{B})
  \\&= \bullet \circ (\bullet \circ (\mathsf{B},\mathsf{B}_1 \circ a^\bullet),\mathsf{B})
  = \bullet \circ (\bullet \circ (\mathsf{B},\bullet \circ (\mathsf{B}, a^\bullet)),\mathsf{B})
  = (\vdash \mathsf{B}(\mathsf{B}a^\bullet)\mathsf{B})
\end{align*}
For {\bf [P.7]} we have:
\begin{align*}
  & (\vdash \mathsf{B}b^\bullet(\mathsf{B}a^\bullet\mathsf{B}))
    = \bullet \circ (\bullet \circ (\mathsf{B},b^\bullet), \bullet \circ (\bullet \circ (\mathsf{B},a^\bullet), \mathsf{B}))
    = \bullet \circ (\mathsf{B_1} \circ b^\bullet, \bullet \circ (\mathsf{B}_1 \circ a^\bullet,\mathsf{B}))
  \\&= \mathsf{B}_0 \circ (b^\bullet, \mathsf{B}_0 \circ (a^\bullet, \mathsf{B}))
  = s \circ \Lambda(\bullet \circ (b^\bullet,\bullet \circ (\mathsf{B}_0 \circ (a^\bullet, \mathsf{B}),1)))
  \\&= s \circ \Lambda(\bullet \circ (b^\bullet, \bullet \circ (a^\bullet, \bullet \circ (\mathsf{B},1))   ))
  = s \circ \Lambda(\bullet \circ (b^\bullet, \bullet \circ (a^\bullet, \mathsf{B}_1)   ))
  \\&= s \circ \Lambda(\bullet \circ (b^\bullet, \bullet \circ (\mathsf{B}_1,a)   ))
  = s \circ \Lambda(\bullet \circ (b^\bullet,  \mathsf{B}_0 \circ (1,a)   ))
  = s \circ \Lambda(\bullet \circ (\mathsf{B}_0 \circ (1,a),b)  )
  \\&= s \circ \Lambda(\bullet \circ (\mathsf{B}_0,1) \circ (1,a,b))
  = s \circ \Lambda(\bullet \circ (1,\bullet) \circ (1,a,b))
  = s \circ \Lambda(\bullet \circ (1,\bullet \circ (a,b)))
  \\&= (\bullet \circ (a,b))^\bullet
  = (\vdash (ab)^\bullet)
\end{align*}
We know that {\bf [P.3]} follows from the other axioms, and so $(U,\bullet)$ is a planar $\lambda$-algebra in $\mathcal{M}$.

In order to complete the proof that $(U,\bullet)$ is a planar $\lambda$-model in $\mathcal{M}$, we need only show that is is weakly multi-extensional. To that end, we establish the following useful fact: for all $n \in \mathbb{N}$ we have: 
  \begin{equation}\label{eqn:bi-fact}
    (\vdash \mathsf{B}^{*n}\mathsf{I}) = (s \circ \Lambda)^{\circ n+1}(\bullet^n) \in \mathcal{M}(;A)
  \end{equation}
  where $(s \circ \Lambda)^{\circ n}(x)$ denotes the (syntactic) result of applying $s \circ \Lambda(-)$ to $x$ $n$ times, so that for example, $(s \circ \Lambda)^{\circ 3}(x) = s \circ \Lambda(s \circ \Lambda(s \circ \Lambda(x)))$. We prove this by induction on $n$. For the base case we have:
  \begin{align*}
    & (\vdash \mathsf{B}^{*0}\mathsf{I})
      = \mathsf{I}
      = s \circ \Lambda(1_U)
      = s \circ \Lambda(\bullet^0)
      = (s \circ \Lambda)^{\circ 1}(\bullet^0)
  \end{align*}
For the inductive case, if  $(\vdash \mathsf{B}^{*n}\mathsf{I}) = (s \circ \Lambda)^{\circ n+1}(\bullet^n)$ then we have:
\begin{align*}
  & (\vdash \mathsf{B}^{*n+1}\mathsf{I})
    = (\vdash \mathsf{B}(\mathsf{B}^{*n}\mathsf{I}))
    = \bullet \circ (\mathsf{B},\bullet \circ (\mathsf{B}^{*n},\mathsf{I}))
    = \mathsf{B}_1 \circ \bullet \circ (\mathsf{B}^{*n},\mathsf{I})
  \\&= s \circ \Lambda(\mathsf{B}_0) \circ \bullet \circ (\mathsf{B}^{*n},\mathsf{I})
  = s \circ \Lambda(s \circ \Lambda(\bullet \circ (1_U,\bullet))) \circ \bullet \circ (\mathsf{B}^{*n},\mathsf{I})
  \\&= s \circ \Lambda(s \circ \Lambda(\bullet \circ (1_U,\bullet) \circ (\bullet \circ (\mathsf{B}^{*n},\mathsf{I}),1_U,1_U)))
  \\&= s \circ \Lambda(s \circ \Lambda(\bullet \circ (\bullet \circ (\mathsf{B}^{*n},\mathsf{I}),\bullet)))
  = s \circ \Lambda(s \circ \Lambda(\bullet \circ ((s \circ \Lambda)^{\circ n+1}(\bullet^n),\bullet)))
  \\&= s \circ \Lambda(s \circ \Lambda((s \circ \Lambda)^{\circ n}(\bullet^n) \circ \bullet))
  = s \circ \Lambda(s \circ \Lambda((s \circ \Lambda)^{\circ n}(\bullet^n \circ (\bullet,1_U,\ldots,1_U))))
  \\&= (s \circ \Lambda)^{\circ n+2}(\bullet^{n+1})
\end{align*}
From which we conclude that the equation (\ref{eqn:bi-fact}) holds for all $n \in \mathbb{N}$.

We proceed to show that our applicative system is weakly multi-extensional. Suppose that we have $\bullet^n \circ (a,1_U,\ldots,1_U) = \bullet^n \circ (b,1_U,\ldots,1_U)$. Then we have:
\begin{align*}
  & (\vdash \mathsf{B}^{*n}\mathsf{I}a)
    = \bullet \circ (\bullet \circ (\mathsf{B}^{*n},\mathsf{I}),a)
    \overset{\text{(\ref{eqn:bi-fact})}}{=} \bullet \circ ((s \circ \Lambda)^{\circ n+1}(\bullet^n),a)
  \\&= (s \circ \Lambda)^{\circ n}(\bullet^n) \circ a
  = (s \circ \Lambda)^{\circ n}(\bullet^n \circ (a,1_U,\ldots,1_U))
  \\&= (s \circ \Lambda)^{\circ n}(\bullet^n \circ (b,1_U,\ldots,1_U))
  = (\vdash \mathsf{B}^{*n}\mathsf{I}b)
\end{align*}
and so $(A,\bullet)$ weakly multi-extensional, and is thus a planar $\lambda$-model.
\end{proof}

It now follows that the adjunction of Theorem~\ref{thm:w-triples-weakly-closed} holds for semi-closed multicategories and planar $\lambda$-models. We modify Definition~\ref{def:w-triples} to obtain:
\begin{definition}\label{def:s-triples}
  An \emph{$s$-triple} $(\mathcal{M},A,\bullet)$ consists of a multicategory $\mathcal{M}$ together with a planar $\lambda$-model $(A,\bullet)$ in $\mathcal{*}$. A \emph{morphism} of $s$-triples $F : (\mathcal{M},A,\bullet) \to (\mathcal{N},B,\odot)$ consists of a functor $F : \mathcal{M} \to \mathcal{N}$ such that $FA = B$ and $F(\bullet) = \odot$. Write $\mathsf{T}_s$ for the category of $s$-triples and their morphisms.
\end{definition}
Now it follows from Lemma~\ref{lem:semi-closed-reflexive} and Lemma~\ref{lem:computable-semi-closed} that the adjunction of Theorem~\ref{thm:w-triples-weakly-closed} specialises to planar $\lambda$-models and semi-closed multicategories, as in:
\begin{theorem}\label{thm:s-triples-semi-closed}
    There is an adjunction:
    \[
    \begin{tikzcd}
      \mathsf{O}_s \ar[r,phantom,"{\scriptstyle \perp}"] \ar[r,"L", shift left=0.5em] & \mathsf{T}_s \ar[l,"R", shift left=0.5em]
    \end{tikzcd}
  \]
  where $\mathsf{O}_s$ is the category of semi-closed operads and closed functors. Moreover, the unit of this adjunction is an isomorphism, witnessing $\mathsf{O}_s$ as a full subcategory of $\mathsf{T}_s$.
\end{theorem}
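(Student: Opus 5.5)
The plan is to restrict the adjunction $L \dashv R$ of Theorem~\ref{thm:w-triples-weakly-closed} along the evident inclusions $\mathsf{O}_s \hookrightarrow \mathsf{O}_w$ and $\mathsf{T}_s \hookrightarrow \mathsf{T}_w$. We do not rebuild it from scratch. The inclusion $\mathsf{T}_s \hookrightarrow \mathsf{T}_w$ is full, because morphisms of $s$-triples and of $w$-triples are defined identically (a functor preserving carrier and application). Similarly $\mathsf{O}_s \hookrightarrow \mathsf{O}_w$ is full, because both have closed functors as morphisms. Every planar $\lambda$-model is a planar combinatory algebra, and every semi-closed operad is weakly-closed, so both inclusions make sense.

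The main task is to check that $L$ and $R$ map the subcategories into each other.

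For $L$: a semi-closed operad $\mathcal{M}$ is sent to the triple $(\mathcal{M},*,\mathsf{ev}_{*,*})$. As observed after Lemma~\ref{lem:reflexive-weakly-closed}, $(*,1,1)$ is a reflexive object because $[*,*]=*$. Lemma~\ref{lem:semi-closed-reflexive} then says $(*,\mathsf{ev}_{*,*})$ is a planar $\lambda$-model, so $L\mathcal{M}\in\mathsf{T}_s$.

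For $R$: an $s$-triple $(\mathcal{M},A,\bullet)$ is sent to $R(\mathcal{M},A,\bullet)$, and Lemma~\ref{lem:computable-semi-closed} says this is semi-closed.

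One subtlety needs attention here. Objects of $\mathsf{O}_w$ carry an implicitly fixed family $\Lambda$, while a semi-closed operad requires a family satisfying the naturality condition (\ref{eq:semi-closed-naturality}). Closed functors only refer to $[A,B]$ and $\mathsf{ev}$, not $\Lambda$, so the hom-sets of $\mathsf{O}_s$ and $\mathsf{O}_w$ agree. Being semi-closed is therefore a property of the underlying structure $([-,-],\mathsf{ev})$, and the fixed choice of $\Lambda$ on $R(\mathcal{M},A,\bullet)$ may be taken to be the one from Lemma~\ref{lem:computable-semi-closed}. Likewise, the choice of $\mathsf{B}$, $\mathsf{I}$, $a^\bullet$ in the $\lambda$-model structure does not affect morphisms. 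On morphisms, $L$ and $R$ act exactly as before, and fullness of both inclusions guarantees these actions land in the subcategories.

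With this in place, a standard fact finishes the argument: if $L \dashv R$ and $L,R$ restrict to full subcategories $\mathsf{O}_s$ and $\mathsf{T}_s$, then the restricted functors are adjoint. The hom-set bijection $\mathsf{T}_w(L\mathcal{M},X)\cong\mathsf{O}_w(\mathcal{M},RX)$ restricts verbatim, since the hom-sets are unchanged, and naturality is inherited. The unit components of the restricted adjunction are the original ones $\mathcal{M}\to RL\mathcal{M}$, which are isomorphisms in $\mathsf{O}_w$. Their inverses are again closed functors between objects of the full subcategory $\mathsf{O}_s$, so they are isomorphisms in $\mathsf{O}_s$. An invertible unit means $L$ is fully faithful, which exhibits $\mathsf{O}_s$ as a (coreflective) full subcategory of $\mathsf{T}_s$.

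I expect the only real obstacle to be the bookkeeping around the implicit choice of $\Lambda$ just discussed. One must confirm that $RL\mathcal{M}$ is isomorphic to $\mathcal{M}$ as a semi-closed operad, not merely as a weakly-closed one. This follows because closed functors ignore $\Lambda$. The semi-closedness of $RL\mathcal{M}$, given by Lemma~\ref{lem:computable-semi-closed} since $L\mathcal{M}$ is an $s$-triple, is then compatible with that of $\mathcal{M}$ through the isomorphism. Everything else is inherited directly from Theorem~\ref{thm:w-triples-weakly-closed}.
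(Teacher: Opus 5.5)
Your proposal is correct and takes the same route as the paper, which obtains the theorem by noting that Lemma~\ref{lem:semi-closed-reflexive} sends $L$ into $\mathsf{T}_s$ and Lemma~\ref{lem:computable-semi-closed} sends $R$ into $\mathsf{O}_s$, so the adjunction of Theorem~\ref{thm:w-triples-weakly-closed} restricts. You also spell out the routine points the paper leaves implicit: fullness of both inclusions, inheritance of the hom-set bijection and of the invertible unit, and the fact that the choice of $\Lambda$ is irrelevant to closed functors.
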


What this means is that the only real difference between a planar $\lambda$-model and a semi-closed multicategory is that while a planar $\lambda$-model must exist in some ambient multicategory, the corresponding semi-closed operad exists at the same metatheoretical level as that ambient multicategory. Given a planar $\lambda$-model $(A,\bullet)$ in $\mathcal{M}$, we can also think of $R(\mathcal{M},A,\bullet)$ as the smallest possible ambient multicategory that $(A,\bullet)$ can sensibly inhabit, wherein it appears as the induced planar $\lambda$-model $(*,\mathsf{ev})$.

\subsection{On Weak Multi-Extensionality and Having Enough Points}\label{sec:enough-points}
Here we reconcile the difference between the notion of weak multi-extensionality used in Definition~\ref{def:planar-lambda-model} and the (clearly weaker) notion of weak extensionality typically encountered in definitions of $\lambda$-model (see e.g.,~\cite{Scott1980,Hindley2008}). We show that whenever the ambient multicategory has enough points, the two notions coincide. In particular, the multicategory of sets and functions has enough points, and so in the classical setting it does not matter which we one we use.

We begin with the more traditional notion of weak extensionality:
\begin{definition}\label{def:weak-extensionality}
  Say that a planar $\lambda$-algebra is \emph{weakly extensional} in case for all $a,b \in \mathcal{M}(;A)$ we have:
  \[
    (x \vdash ax = bx) \Rightarrow (\vdash \mathsf{BI}a = \mathsf{BI}b)
  \]
\end{definition}

Next, we say what it means for a multicategory to have enough points:
\begin{definition}\label{def:enough-points}
  Say that a multicategory $\mathcal{M}$ has \emph{enough points} in case for all $f,g \in \mathcal{M}(A_1,\ldots,A_n ; B)$, if $f \circ (x_1,\ldots,x_n) = g \circ (x_1,\ldots,x_n)$ for all $(x_i \in \mathcal{M}(;A_i))_{i=1}^n$ then $f = g$.
\end{definition}

The promised result is now relatively straightforward:
\begin{lemma}
  Let $(A,\bullet)$ be a planar $\lambda$-algebra in a multicategory $\mathcal{M}$ with enough points. Then $(A,\bullet)$ is weakly extensional if and only if it is a planar $\lambda$-model.
\end{lemma}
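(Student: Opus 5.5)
We prove the two implications separately. The direction from planar $\lambda$-model to weakly extensional holds without enough points. Weak multi-extensionality at $n=1$ reads $(x \vdash ax = bx) \Rightarrow (\vdash \mathsf{B}^{*1}\mathsf{I}a = \mathsf{B}^{*1}\mathsf{I}b)$. So it suffices to show $\vdash \mathsf{B}^{*1}\mathsf{I}c = \mathsf{BI}c$ for every $c \in \mathcal{M}(;A)$. By definition, $\mathsf{B}^{*1} = \mathsf{B}^{*0} * \mathsf{B} = \mathsf{I} * \mathsf{B} = \mathsf{BIB}$, and $\mathsf{BIB} = \mathsf{IB} = \mathsf{B}$ by [B] and [I]. (Alternatively one can use Lemma~\ref{lem:lambda-algebra-facts}.(vii) with $x = \mathsf{B}$, which gives $\mathsf{BI}(\mathsf{B}\mathsf{B})$-style simplifications, but the direct computation suffices.) Hence $\mathsf{B}^{*1}\mathsf{I}c = \mathsf{BI}c$, and this direction is done.

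For the converse, assume weak extensionality and enough points, and suppose $x_1,\ldots,x_n \vdash ax_1\cdots x_n = bx_1\cdots x_n$. We argue by induction on $n$, with the statement quantified over all $a,b$.

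\textbf{Base cases.} For $n=0$ the hypothesis gives $\vdash a = b$, and then $\mathsf{B}^{*0}\mathsf{I}a = \mathsf{II}a = a = b = \mathsf{II}b$. The case $n=1$ is weak extensionality together with the identity $\mathsf{B}^{*1}\mathsf{I}c = \mathsf{BI}c$ above.

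\textbf{Inductive step.} For $n+1$, the hypothesis can be read as an equality of morphisms in $\mathcal{M}(A^{n+1};A)$. Substituting arbitrary points $c \in \mathcal{M}(;A)$ for $x_1$, we get $x_2,\ldots,x_{n+1} \vdash (ac)x_2\cdots x_{n+1} = (bc)x_2\cdots x_{n+1}$. The induction hypothesis then gives $\vdash \mathsf{B}^{*n}\mathsf{I}(ac) = \mathsf{B}^{*n}\mathsf{I}(bc)$ for every point $c$.

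Next we rewrite both sides. Using [B], $\mathsf{B}^{*n}\mathsf{I}(ac) = \mathsf{B}(\mathsf{B}^{*n}\mathsf{I})ac$, and this holds as a morphism $x \vdash \mathsf{B}(\mathsf{B}^{*n}\mathsf{I})ax$ evaluated at $c$. By enough points we conclude $x \vdash \mathsf{B}(\mathsf{B}^{*n}\mathsf{I})ax = \mathsf{B}(\mathsf{B}^{*n}\mathsf{I})bx$. Weak extensionality then yields $\vdash \mathsf{BI}(\mathsf{B}(\mathsf{B}^{*n}\mathsf{I})a) = \mathsf{BI}(\mathsf{B}(\mathsf{B}^{*n}\mathsf{I})b)$. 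Since $\mathsf{B}(\mathsf{B}^{*n}\mathsf{I})a = \mathsf{B}dy$ has the form $\mathsf{B}xy$, Lemma~\ref{lem:lambda-algebra-facts}.(vi) removes the outer $\mathsf{BI}$, leaving $\vdash \mathsf{B}(\mathsf{B}^{*n}\mathsf{I})a = \mathsf{B}(\mathsf{B}^{*n}\mathsf{I})b$.

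\textbf{Main obstacle.} It remains to identify $\mathsf{B}(\mathsf{B}^{*n}\mathsf{I})$ with $\mathsf{B}^{*n+1}\mathsf{I}$. By definition, $\mathsf{B}^{*n+1}\mathsf{I} = \mathsf{B}(\mathsf{B}^{*n})\mathsf{B}\mathsf{I}$, and by [B] this equals $\mathsf{B}^{*n}(\mathsf{BI})$. So we must prove
\[
  \vdash \mathsf{B}^{*n}(\mathsf{BI}) = \mathsf{B}(\mathsf{B}^{*n}\mathsf{I})
\]
as closed terms, i.e.\ without applying either side to further arguments. I plan a separate induction on $n$, using Lemma~\ref{lem:lambda-algebra-facts}.(ii) and (iv).

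For $n=0$, both sides reduce to $\mathsf{BI}$: the left is $\mathsf{I}(\mathsf{BI}) = \mathsf{BI}$, and the right is $\mathsf{B}(\mathsf{II}) = \mathsf{BI}$.

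For the step, write $\mathsf{B}^{*n+1} = \mathsf{B}^{*n}*\mathsf{B}$. Then $\mathsf{B}^{*n+1}(\mathsf{BI}) = \mathsf{B}^{*n}(\mathsf{B}(\mathsf{BI}))$. On the other side, $\mathsf{B}(\mathsf{B}^{*n+1}\mathsf{I}) = \mathsf{B}(\mathsf{B}^{*n}(\mathsf{BI}))$, which by the inductive hypothesis equals $\mathsf{B}(\mathsf{B}(\mathsf{B}^{*n}\mathsf{I}))$. Lemma~\ref{lem:lambda-algebra-facts}.(ii) should let us commute $\mathsf{B}$ past the point composite. If this closed-term bookkeeping turns out to be awkward, the fallback is to carry it out pointwise: enough points, plus the fact that every term here has the form $\mathsf{B}xy$ (so (vi) applies), together with weak extensionality, should let us establish the equality after applying both sides to a point.
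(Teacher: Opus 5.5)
Your argument for the converse is the same as the paper's. You induct on $n$, instantiate the first variable at an arbitrary point $c$, and apply the hypothesis to $ac,bc$. Then you pass to one free variable via enough points, apply weak extensionality, and strip the outer $\mathsf{BI}$ with Lemma~\ref{lem:lambda-algebra-facts}.(vi).

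You are right that $\vdash \mathsf{B}^{*n+1}\mathsf{I} = \mathsf{B}(\mathsf{B}^{*n}\mathsf{I})$ needs justification; the paper uses it without comment. However, your proposal does not yet supply one. Your induction for $\vdash \mathsf{B}^{*n}(\mathsf{BI}) = \mathsf{B}(\mathsf{B}^{*n}\mathsf{I})$ does not close as set up. The step needs $\vdash \mathsf{B}^{*n}(\mathsf{B}(\mathsf{BI})) = \mathsf{B}(\mathsf{B}^{*n}(\mathsf{BI}))$, which is the same commutation with argument $\mathsf{BI}$ instead of $\mathsf{I}$. Your hypothesis does not cover that, and the appeal to Lemma~\ref{lem:lambda-algebra-facts}.(ii) does not address it.

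The fix is to strengthen the claim to $y \vdash \mathsf{B}^{*n}(\mathsf{B}y) = \mathsf{B}(\mathsf{B}^{*n}y)$. The base case is [I] applied twice. The step uses only [B] and the hypothesis at $\mathsf{B}y$:
\[
\mathsf{B}^{*n+1}(\mathsf{B}y) = \mathsf{B}\mathsf{B}^{*n}\mathsf{B}(\mathsf{B}y) = \mathsf{B}^{*n}(\mathsf{B}(\mathsf{B}y)) = \mathsf{B}(\mathsf{B}^{*n}(\mathsf{B}y)) = \mathsf{B}(\mathsf{B}\mathsf{B}^{*n}\mathsf{B}y) = \mathsf{B}(\mathsf{B}^{*n+1}y).
\]
Taking $y = \mathsf{I}$ and using $\mathsf{B}^{*n+1}\mathsf{I} = \mathsf{B}\mathsf{B}^{*n}\mathsf{B}\mathsf{I} = \mathsf{B}^{*n}(\mathsf{BI})$ gives exactly what you need. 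No $\lambda$-algebra axioms are used, and your pointwise fallback becomes unnecessary. Alternatively, associativity of $*$ (Lemma~\ref{lem:lambda-algebra-facts}.(iv)) together with $\mathsf{I}*x = x*\mathsf{I}$ (Lemma~\ref{lem:lambda-algebra-facts}.(iii)) gives $\mathsf{B}^{*n}*\mathsf{B} = \mathsf{B}*\mathsf{B}^{*n}$, hence $\mathsf{B}^{*n+1}\mathsf{I} = \mathsf{BB}\mathsf{B}^{*n}\mathsf{I} = \mathsf{B}(\mathsf{B}^{*n}\mathsf{I})$.

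There is also a smaller slip in the forward direction. The step $\mathsf{BIB} = \mathsf{IB}$ is not an instance of [B], which needs three arguments. What you actually need is $\mathsf{B}^{*1}\mathsf{I} = \mathsf{BIBI} = \mathsf{I}(\mathsf{BI}) = \mathsf{BI}$, and that does follow from [B] and [I]. The equation $\mathsf{BIB} = \mathsf{B}$ is true in a planar $\lambda$-algebra, but proving it requires [P.1] and Lemma~\ref{lem:lambda-algebra-facts}.(vi), and you do not need it here.
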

\begin{proof}
  Clearly weak multi-extensionality implies weak extensionality, so we need only show the converse. To that end, suppose that $(A,\bullet)$ is weakly extensional. We proceed by induction on $n \in \mathbb{N}$. For the base case, if $\vdash a = b$ then immediately $\vdash \mathsf{B}^{*0}\mathsf{I}a = a = b = \mathsf{B}^{*0}\mathsf{I}b$. For the inductive case, suppose that we have:
  \[
    x,x_1,\ldots,x_n \vdash axx_1 \cdots x_n = bxx_1 \cdots x_n
  \]
  Then for all $c \in \mathcal{M}(;A)$ we have:
  \[
    x_1,\ldots,x_n \vdash acx_1 \cdots x_n = bcx_1 \cdots x_n
  \]
  and so by the inductive hypothesis we have $\vdash \mathsf{B}^{*n}\mathsf{I}(ac) = \mathsf{B}^{*n}\mathsf{I}(bc)$. This gives:
  \[
    \vdash \mathsf{B}^{*n+1}\mathsf{I}ac = \mathsf{B}(\mathsf{B}^{*n}\mathsf{I})ac = \mathsf{B}^{*n}\mathsf{I}(ac) = \mathsf{B}^{*n}\mathsf{I}(bc) = \mathsf{B}^{*n+1}\mathsf{I}bc
  \]
  Now since $c \in \mathcal{M}(;A)$ is arbitrary and $\mathcal{M}$ has enough points, we obtain:
  \[
    x \vdash \mathsf{B}^{*n+1}\mathsf{I}ax = \mathsf{B}^{*n+1}\mathsf{I}bx
  \]
  but then weak extensionality gives $\vdash \mathsf{BI}(\mathsf{B}^{*n+1}\mathsf{I}a) = \mathsf{BI}(\mathsf{B}^{*n+1}\mathsf{I}b)$ and we have: 
  \begin{align*}
    & \vdash \mathsf{B}^{*n+1}\mathsf{I}a
      = \mathsf{B}(\mathsf{B}^{*n}\mathsf{I})a
      \overset{\text{\ref{lem:lambda-algebra-facts}.(vi)}}{=} \mathsf{BI}(\mathsf{B}(\mathsf{B}^{*n}\mathsf{I})a)
      \\&= \mathsf{BI}(\mathsf{B}^{*n+1}\mathsf{I}a)
      = \mathsf{BI}(\mathsf{B}^{*n+1}\mathsf{I}b)
      = \mathsf{B}^{*n+1}\mathsf{I}b
  \end{align*}
  The claim follows.
\end{proof}
  
\section{A Planar Version of Scott's Representation Theorem}\label{sec:planar-scott}
In this section we obtain a planar version of Scott's representation theorem~\cite{Scott1980}. That is, we show that every model of the planar $\lambda$-calculus (i.e., semi-closed multicategory) arises from a reflexive object in a closed multicategory. As an intermediate step, we show that the idempotent splitting completion of any semi-closed multicategory is closed. To begin, we define:
\begin{definition}\label{def:multi-idempotent-splitting}
  Let $\mathcal{M}$ be a multicategory. The \emph{idempotent splitting of $\mathcal{M}$}, written $\mathsf{Split}(\mathcal{M})$, is the multicategory whose objects are pairs $(A,a)$ consisting of an object $A \in \mathcal{M}_0$ and an idempotent $a = a \circ a \in \mathcal{M}(A;A)$. Morphisms $f \in \mathsf{Split}(\mathcal{M})((A_1,a_1),\ldots,(A_n,a_n);(B,b))$ are morphisms $f \in \mathcal{M}(A_1,\ldots,A_n;B)$ such that $b \circ f \circ (a_1,\ldots,a_n) = f$. For identities we take $1_{(A,a)} = a$, and composition is given by composition in $\mathcal{M}$. 
\end{definition}

Next, in any semi-closed multicategory $\mathcal{M}$, for $f \in \mathcal{M}(A;B)$, $g \in \mathcal{M}(C;D)$, we define a morphism $[f,g] \in \mathcal{M}([B,C];[A,D])$ as in:
\[
  [f,g] = \Lambda^{[B,C]}_{A,D}(g \circ \mathsf{ev}_{B,C} \circ (1_{[B,C]},f))
\]
We now have:
\begin{lemma}\label{lem:split-closed}
  Let $\mathcal{M}$ be a semi-closed multicategory. Then $\mathsf{Split}(\mathcal{M})$ is a closed multicategory.
\end{lemma}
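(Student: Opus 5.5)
We define the closed structure on $\mathsf{Split}(\mathcal{M})$ by taking $[(A,a),(B,b)] = ([A,B],[a,b])$, evaluation $\mathsf{ev}_{(A,a),(B,b)} = b \circ \mathsf{ev}_{A,B} \circ ([a,b],a)$, and, for $f \in \mathsf{Split}(\mathcal{M})(\Gamma,(A,a);(B,b))$ with $\Gamma = (A_1,a_1),\ldots,(A_n,a_n)$, the abstraction $\Lambda(f) = [a,b] \circ \Lambda^{A_1,\ldots,A_n}_{A,B}(f)$. The proof then has four parts: that these data live in $\mathsf{Split}(\mathcal{M})$, the $\beta$-law, the $\eta$-law, and well-definedness of $\Lambda$.

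The first step is to show that $[a,b]$ is idempotent and to record how it interacts with $\mathsf{ev}$. By semi-closed naturality (\ref{eq:semi-closed-naturality}) and the $\beta$-law in $\mathcal{M}$, we have $[f,g] \circ h = \Lambda(g \circ \mathsf{ev} \circ (h,f))$ for any $h$ into $[B,C]$, and $\mathsf{ev}\circ([f,g],1) = g\circ\mathsf{ev}\circ(1,f)$. Two consequences follow:
\[
[f,g]\circ[f',g'] = \Lambda(g\circ g'\circ \mathsf{ev}\circ(1,f'\circ f)) = [f'\circ f, g\circ g'],
\]
so $[a,b]$ is idempotent, and
\[
\mathsf{ev}\circ([a,b]\circ h, 1) = b\circ \mathsf{ev}\circ(h,a).
\]
Using these together with $a\circ a = a$ and $b\circ b = b$, checking that $\mathsf{ev}_{(A,a),(B,b)}$ satisfies $b\circ e\circ([a,b],a) = e$ is routine.

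The $\beta$-law uses the same identity. For $f$ a split morphism,
\[
e\circ(\Lambda(f),a) = b\circ \mathsf{ev}\circ([a,b]\circ[a,b]\circ\Lambda(f), a) = b\circ b\circ \mathsf{ev}\circ(\Lambda f, a\circ a) = b\circ f\circ(1,\ldots,1,a).
\]
Since identities in $\mathsf{Split}$ are the idempotents, we must precompose with $(a_1,\ldots,a_n,a)$, and the result equals $f$ by the split condition $b\circ f\circ(a_1,\ldots,a_n,a) = f$. That $\Lambda(f)$ is itself a split morphism, i.e. $[a,b]\circ\Lambda(f)\circ(a_1,\ldots,a_n) = \Lambda(f)$, follows from naturality, which turns $\Lambda(f)\circ(a_i)$ into $\Lambda(f\circ(a_1,\ldots,a_n,1))$, followed by the displayed $[f,g]\circ h$ formula and the split condition on $f$.

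The $\eta$-law is the main obstacle, because it is exactly what fails in $\mathcal{M}$ and must be recovered from the idempotent $[a,b]$. Let $g$ be a split morphism into $([A,B],[a,b])$, so $g = [a,b]\circ g\circ(a_i)$. Then
\[
\Lambda(e\circ(g,a)) = [a,b]\circ\Lambda(b\circ\mathsf{ev}\circ([a,b]\circ g, a)).
\]
The plan is to write $\Lambda(b\circ\mathsf{ev}\circ(h,a)) = \Lambda(b\circ\mathsf{ev}\circ(1,a)\circ(h,1)) = [a,b]\circ h$ by naturality. Applying this with $h = [a,b]\circ g$ gives $[a,b]\circ[a,b]\circ[a,b]\circ g = [a,b]\circ g = g$, where the last equality uses $g = [a,b]\circ g\circ(a_i)$ and idempotence of $[a,b]$. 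The key point is that naturality lets every $\Lambda$ of an $\mathsf{ev}$-expression collapse to $[a,b]\circ(-)$, and this reduction is where the semi-closed, rather than merely weakly-closed, hypothesis is essential.

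Finally, we check that the definitions do not depend on the chosen $\Lambda$ in a way that breaks anything. Since $\beta$ and $\eta$ together make $e\circ(-,a)$ bijective on the relevant hom-sets, the closed structure on $\mathsf{Split}(\mathcal{M})$ is as required by Definition~\ref{def:closed-multicategory}, which completes the proof.
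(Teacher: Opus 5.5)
Your proposal is correct and follows essentially the same route as the paper. Both use the same internal hom $([A,B],[a,b])$ and the same evaluation $b \circ \mathsf{ev}_{A,B} \circ ([a,b],a)$, and both obtain idempotence of $[a,b]$, well-definedness of $\Lambda$, and the $\beta$ and $\eta$ laws by using naturality to rewrite $[a,b] \circ h$ as $\Lambda(b \circ \mathsf{ev}_{A,B} \circ (h,a))$. The extra $[a,b] \circ (-)$ in your definition of $\Lambda$ is harmless but redundant, since for split $f$ one has $[a,b] \circ \Lambda^\Gamma_{A,B}(f) = \Lambda^\Gamma_{A,B}(b \circ f \circ (1_\Gamma,a)) = \Lambda^\Gamma_{A,B}(f)$; this is why the paper simply takes $\Lambda^\Gamma_{A,B}(f)$ itself.
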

\begin{proof}
  For $(A,a)$ and $(B,b)$ objects of $\mathsf{Split}(\mathcal{M})$ we define $[(A,a),(B,b)] = ([A,B],[a,b])$. For this to be well-defined we must have  that $[a,b]$ is idempotent. Indeed, we have:
  \begin{align*}
    & [a,b] \circ [a,b]
      = \Lambda^{[A,B]}_{A,B}(b \circ \mathsf{ev}_{A,B} \circ (1_{[A,B]},a)) \circ \Lambda^{[A,B]}_{A,B}(b \circ \mathsf{ev}_{A,B} \circ (1_{[A,B]},a))
    \\&= \Lambda^{[A,B]}_{A,B}(b \circ \mathsf{ev}_{A,B} \circ (1_{[A,B]},a) \circ ( \Lambda^{[A,B]}_{A,B}(b \circ \mathsf{ev}_{A,B} \circ (1_{[A,B]},a)),1_A) )
    \\&= \Lambda^{[A,B]}_{A,B}(b \circ \mathsf{ev}_{A,B} \circ ( \Lambda^{[A,B]}_{A,B}(b \circ \mathsf{ev}_{A,B} \circ (1_{[A,B]},a)),1_A)  \circ (1_{[A,B]},a) )
    \\&= \Lambda^{[A,B]}_{A,B}(b \circ b \circ \mathsf{ev}_{A,B} \circ (1_{[A,B]},a)\circ (1_{[A,B]},a))
    = \Lambda^{[A,B]}_{A,B}(b \circ \mathsf{ev}_{A,B} \circ (1_{[A,B]},a))
    = [a,b]
  \end{align*}
  Now $\mathsf{ev}_{(A,a),(B,b)} \in \mathsf{Split}(\mathcal{M})([(A,a),(B,b)],(A,a) ; (B,b))$ is given by $b \circ \mathsf{ev}_{A,B} \circ ([a,b],a) \in \mathcal{M}([A,B],A;B)$. Clearly this is well-defined as a map of $\mathsf{Split}(\mathcal{M})$. Given a morphism $f \in \mathsf{Split}(\mathcal{M})(\mathbf{\Gamma},(A,a);(B,b))$ where $\mathbf{\Gamma} = (C_1,c_1),\ldots,(C_n,c_n)$, write $\Gamma = C_1,\ldots,C_n$ and define $\Lambda^\mathbf{\Gamma}_{(A,a),(B,b)}(f) \in \mathsf{Split}(\mathcal{M})(\mathbf{\Gamma};[(A,a),(B,b)])$ to be the morphism given by $\Lambda^\Gamma(f) \in \mathcal{M}(\Gamma;[A,B])$. We show that this is well-defined:
  \begin{align*}
    & [a,b] \circ \Lambda^\Gamma_{A,B}(f) \circ (c_1,\ldots,c_n)
      = \Lambda^{[A,B]}_{A,B}(b \circ \mathsf{ev}_{A,B} \circ (1_{[A,B]},a)) \circ \Lambda^\Gamma_{A,B}(f \circ (c_1,\ldots,c_n,1_A))
    \\&= \Lambda^{\Gamma}_{A,B}(b \circ \mathsf{ev}_{A,B} \circ (1_{[A,B]},a) \circ (\Lambda^\Gamma_{A,B}(f \circ (c_1,\ldots,c_n,1_A)),1_A))
    \\&= \Lambda^{\Gamma}_{A,B}(b \circ \mathsf{ev}_{A,B} \circ (\Lambda^\Gamma_{A,B}(f \circ (c_1,\ldots,c_n,1_A)),1_A)  \circ (1_{[A,B]},a))
    \\&= \Lambda^\Gamma_{A,B}(b \circ f \circ (c_1,\ldots,c_n,a))
    = \Lambda^\Gamma_{A,B}(f)
  \end{align*}
  We have $\mathsf{ev}_{(A,a),(B,b)} \circ (\Lambda^\mathbf{\Gamma}_{(A,a),(B,b)}(f),1_{(A,a)}) = f$ as in:
  \begin{align*}
    & b \circ \mathsf{ev}_{A,B} \circ ([a,b],a) \circ (\Lambda^\Gamma_{A,B}(f),a)
      = b \circ \mathsf{ev}_{A,B} \circ ([a,b] \circ \Lambda^\Gamma_{A,B}(f), a \circ a)
      \\&= b \circ \mathsf{ev}_{A,B} (\Lambda^\Gamma_{A,B}(f),1_A) \circ (1_\Gamma,a)
    = b \circ f \circ (1_{\Gamma},a)
    = f
  \end{align*}
  It remains to show that for any $h \in \mathsf{Split}(\mathcal{M})(\mathbf{\Gamma};[(A,a),(B,b)])$, the equation $\Lambda^{\mathbf{\Gamma}}_{(A,a),(B,b)}(\mathsf{ev}_{(A,a),(B,b)} \circ (h,1_{(A,a)})) = h$ holds. Indeed, we have:
  \begin{align*}
    & \Lambda^\Gamma_{A,B}(b \circ \mathsf{ev}_{A,B} \circ ([a,b],a) \circ (h,a))
    \\& = \Lambda^{[A,B]}_{A,B}(b \circ \mathsf{ev}_{A,B} \circ (\Lambda^{[A,B]}_{A,B}(b \circ \mathsf{ev}_{A,B} \circ (1_{[A,B]},a)),1_A) \circ (1_{[A,B]},a)) \circ h
    \\&= \Lambda^{[A,B]}_{A,B}(b \circ b \circ \mathsf{ev}_{A,B} \circ (1_{[A,B]},a) \circ (1_{[A,B]},a)) \circ h
    \\&=  \Lambda^{[A,B]}_{A,B}(b \circ \mathsf{ev}_{A,B} \circ (1_{[A,B]},a)) \circ h
    = [a,b]\circ h = h
  \end{align*}
  Thus, $\mathsf{Split}(\mathcal{M})$ is closed.
\end{proof}

The promised analogue of Scott's representation theorem is now:
\begin{theorem}\label{thm:planar-scott}
  Let $\mathcal{M}$ be a semi-closed operad. Then $((*,1),\mathsf{ev})$ is a planar $\lambda$-model in $\mathsf{Split}(\mathcal{M})$ and there is an isomorphism of operads:
  \[
    \mathcal{M} \cong R(\mathsf{Split}(\mathcal{M}),(*,1),\mathsf{ev})
  \]
\end{theorem}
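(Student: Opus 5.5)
Here $\mathsf{ev}$ in $((*,1),\mathsf{ev})$ means $\mathsf{ev}_{(*,1),(*,1)}$ in $\mathsf{Split}(\mathcal{M})$. Since the identity $1$ is idempotent, $(*,1)$ is an object of $\mathsf{Split}(\mathcal{M})$. We write $U=(*,1)$.

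We first exhibit $U$ as a reflexive object in the closed multicategory $\mathsf{Split}(\mathcal{M})$. By Lemma~\ref{lem:split-closed}, $[U,U] = ([*,*],[1,1]) = (*,[1,1])$, using $[*,*]=*$ in an operad. The key observation is that $[1,1] = \Lambda(\mathsf{ev}\circ(1,1)) = \Lambda(\mathsf{ev})$ is an idempotent on $*$, but need not be the identity, since $\mathcal{M}$ is only semi-closed. We take $s \in \mathsf{Split}(\mathcal{M})([U,U];U)$ and $r \in \mathsf{Split}(\mathcal{M})(U;[U,U])$ both to be given by $[1,1]\in\mathcal{M}(1)$. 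Well-definedness amounts to $1\circ[1,1]\circ[1,1]=[1,1]$ and $[1,1]\circ[1,1]\circ 1=[1,1]$, both of which follow from idempotence. We then have $r\circ s = [1,1]\circ[1,1] = [1,1] = 1_{[U,U]}$. Lemma~\ref{lem:close-semi-closed} makes $\mathsf{Split}(\mathcal{M})$ semi-closed, so Lemma~\ref{lem:semi-closed-reflexive} gives that $(U,\bullet)$ is a planar $\lambda$-model, where
\[
\bullet = \mathsf{ev}_{U,U}\circ(r,1_U) = 1\circ \mathsf{ev}\circ([1,1],1)\circ([1,1],1).
\]
Unwinding this, using $\mathsf{ev}\circ([1,1],1) = \mathsf{ev}\circ(\Lambda(\mathsf{ev}),1)=\mathsf{ev}$, shows $\bullet=\mathsf{ev}$ as a morphism of $\mathcal{M}$. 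This justifies the statement's name for the model.

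Next we identify $R(\mathsf{Split}(\mathcal{M}),U,\mathsf{ev})$. Its $n$-ary morphisms are the $f\in\mathsf{Split}(\mathcal{M})(U^n;U)$ that are $\mathsf{ev}$-computable. Since $U=(*,1)$, the condition $1\circ f\circ(1,\ldots,1)=f$ is vacuous, so these are exactly the computable elements of $\mathcal{M}(n)$. Every $f\in\mathcal{M}(n)$ is computable: iterating $\Lambda$ gives $a=\Lambda^{\circ n}(f)\in\mathcal{M}(0)$, and repeated use of $\mathsf{ev}\circ(\Lambda(g),1)=g$ yields $\mathsf{ev}^n\circ(a,1,\ldots,1)=f$. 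Hence the map $f\mapsto\overline{f}$ is a bijection $\mathcal{M}(n)\to R(\mathsf{Split}(\mathcal{M}),U,\mathsf{ev})(n)$ for every $n$. It preserves composition and identities because, by Lemma~\ref{lem:computable-maps-operad}, these are inherited from $\mathsf{Split}(\mathcal{M})$, hence from $\mathcal{M}$, and $1_U=1$. This gives the isomorphism of operads.

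If the isomorphism is also meant as one of semi-closed operads, one further checks that it is a closed functor. It sends $\mathsf{ev}$ to $\overline{\mathsf{ev}}=\overline{\bullet}$, which is the evaluation of $R$ by Lemma~\ref{lem:computable-semi-closed}, and the objects match trivially. Closed functors only need to preserve $[-,-]$ and $\mathsf{ev}$, not $\Lambda$, so nothing more is required. I expect the main obstacle to be the first step: recognising that $[U,U]$ is $(*,[1,1])$ rather than $U$, so that the reflexive structure must use the nontrivial idempotent $[1,1]$ for $s$ and $r$, and then verifying carefully that the resulting $\bullet$ is literally $\mathsf{ev}$.
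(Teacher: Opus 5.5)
Your proof is correct. Its second half matches the paper's: every $f\in\mathcal{M}(n)$ is computable with representative $\Lambda^{\circ n}(f)$, so $f\mapsto\overline{f}$ is a bijection compatible with composition and identities. You make the computability step explicit, whereas the paper leaves it implicit.

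The genuine difference is in the first claim. The paper applies Lemma~\ref{lem:semi-closed-reflexive} inside $\mathcal{M}$ itself, to the trivial reflexive object $(*,1,1)$, which gives that $(*,\mathsf{ev})$ is a planar $\lambda$-model in $\mathcal{M}$. It then transports this along the full inclusion $*\mapsto(*,1)$. Since $\mathsf{Split}(\mathcal{M})((*,1)^n;(*,1))=\mathcal{M}(n)$ with the same composition, and every clause of the definition of planar $\lambda$-model (the combinators, \textbf{[P.1--7]}, weak multi-extensionality) only involves such hom-sets, nothing needs to be rechecked.

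You instead give $U$ the nontrivial reflexive structure $s=r=[1,1]$, coming from $[U,U]=(*,[1,1])$. You then use Lemmas~\ref{lem:split-closed} and~\ref{lem:close-semi-closed} to make $\mathsf{Split}(\mathcal{M})$ semi-closed and apply Lemma~\ref{lem:semi-closed-reflexive} there. Finally you check that the induced application collapses to $\mathsf{ev}$, using $\mathsf{ev}\circ(\Lambda(\mathsf{ev}),1)=\mathsf{ev}$.

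The paper's route is shorter and does not depend on the closedness of $\mathsf{Split}(\mathcal{M})$ at all. Yours costs two extra lemmas and an unwinding computation. In exchange, it proves inside the theorem the genuinely Scott-style content that the paper only states in the remark after the theorem: this planar $\lambda$-model is exactly the one induced by a reflexive object in a closed multicategory.
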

\begin{proof}
  That $(*,1)$ is a planar $\lambda$-model in $\mathsf{Split}(\mathcal{M})$ follows from the fact that it is a planar $\lambda$-model in $\mathcal{M}$ (Lemma~\ref{lem:semi-closed-reflexive}). A morphism $\overline{f} \in R(\mathsf{Split}(\mathcal{M}),(*,1),\mathsf{ev})$ is precisely a $(*,1)$-computable morphism $f \in \mathsf{Split}(\mathcal{M})((*,1)^n;(*,1))$, which is precisely a morphism $f \in \mathcal{M}(n)$. The claim follows.
\end{proof}

In particular, $((*,1),[1,1] : (*,[1,1]) \to (*,1), [1,1] : (*,1) \to (*,[1,1]))$ is always a reflexive object in $\mathsf{Split}(\mathcal{M})$. For any semi-closed operad $\mathcal{M}$, Lemma~\ref{lem:split-closed} tells us that $\mathsf{Split}(\mathcal{M})$ is closed, and so Theorem~\ref{thm:planar-scott} tells us that $\mathcal{M}$ arises from a reflexive object in a closed multicategory. Since semi-closed operads are precisely models of the planar $\lambda$-calculus, this can be understood as a (rather slick) version of Scott's representation theorem.

\section{Concluding Remarks and Future Work}\label{sec:conclusion}
We have rigorously compared two notions of model of the planar $\lambda$-calculus: planar $\lambda$-models and semi-closed multicategories. Specifically, we have constructed an adjunction capturing the relationship between these two notions of model (Theorem~\ref{thm:s-triples-semi-closed}). We have moreover used this machinery to obtain a version of Scott's representation theorem for the planar $\lambda$-calculus (Theorem~\ref{thm:planar-scott}).

We view these results as the first step in the larger project of systematically relating different sorts of semi-closed multicategory to different notions of \emph{substructural} $\lambda$-model, much as in the work of Kuzmin et. al., in which different sorts of weakly-closed multicategory are related to different notions of substructural combinatory algebra~\cite{Kuzmin2026Journal}. Ultimately, the aim is to obtain a systematic understanding of the relationship between the combinatory approach to modelling various substructural $\lambda$-calculi and the operadic approach of Hyland~\cite{Hyland2017}.

\bibliographystyle{plain}
\bibliography{citations.bib}

\end{document}